\newtheorem{thm}{Theorem}[section]
\newtheorem{lem}[thm]{Lemma}
\title{Some novel  minimax results for perfect matchings of hexagonal systems
\thanks{This work is supported by NSFC (grant no. 11871256).}}
\author{Xiangqian Zhou, \quad  Heping Zhang\footnote{Corresponding author.}\\[6pt]
\small{School of Mathematics and Statistics, Lanzhou University,
Lanzhou, Gansu 730000, P. R. China}\\[6pt]
\small{E-mail addresses: zhouxq12@lzu.edu.cn,  zhanghp@lzu.edu.cn}}
\date{}
\begin{document}

\maketitle


\begin{abstract}
The anti-forcing number of a perfect matching $M$ of a graph $G$ is
the minimum number of edges of $G$ whose deletion results in a subgraph
with a unique perfect matching $M$, denoted by $af(G,M)$.
When $G$ is a plane bipartite graph, Lei et al. established a minimax result:
For any perfect matching $M$ of $G$,
$af(G,M)$ equals the maximum number of $M$-alternating cycles of $G$
where any two either are disjoint or intersect only at edges in $M$;
For a hexagonal system, the maximum anti-forcing number equals the fries number. In this paper we show that for every perfect matching $M$ of a hexagonal system $H$  with the maximum anti-forcing number or minus one, $af(H,M)$ equals the  number of $M$-alternating hexagons of $H$. Further we show that  a hexagonal system $H$ has a triphenylene as nice subgraph if and only  $af(H,M)$ always equals the  number of $M$-alternating hexagons of $H$  for every perfect matching $M$ of $H$.

\vspace{0.3cm} \noindent\textbf{Keywords:}
Hexagonal system; Perfect matching; Anti-forcing number; Fries number; Triphenylene.

\end{abstract}


\section{Introduction}

All graphs considered in this paper are finite and simple connected graphs.
Let $G$ be a graph with vertex set $V(G)$ and edge set $E(G)$.
A \emph{perfect matching}  $M$ of $G$ is a set of edges of $G$
such that each vertex of $G$ is incident with exactly one edge in $M$.
This graph-theoretical concept coincides with a Kekul\'{e} structure of chemical molecules.

In 1987, Randi\'{c} and Klein \cite{Klein-Randic-1987} proposed
the \emph{innate degree of freedom} of a Kekul\'{e} structure,
i.e. the minimum number of double bonds which simultaneously belong to
the given Kekul\'{e} structure and to no other one.
This notion has arisen in the study of finding resonance structures of a given molecule in chemistry.
Later,  it is named ``forcing number'' by Harary et al. \cite{Harary-1991}.

A \emph{forcing set} $S$ for a perfect matching $M$ of a graph $G$ is a subset of $M$ which
is not contained in other perfect matchings of $G$.
The \emph{forcing number} of $M$ is the smallest cardinality over all forcing sets of $M$,
denoted by $f(G,M)$.
The \emph{maximum forcing number} of $G$ is the maximum value of forcing numbers
of all perfect matchings of $G$, denoted by $F(G)$.
For further information on this topic,
we refer the reader to a survey \cite{Che-Chen-2011} and other references
\cite{Adams-2004,Afshani-2004,Diwan2019,Pachter-1998,
Riddle-2002,Zhang-Li-1995, WYZ16}.

Let $M$ be a perfect matching of a graph $G$.
A cycle (resp.  a path $P$) $C$ of $G$ is \emph{$M$-alternating} if
the edges of $C$ (resp. $P$) appear alternately in $M$ and $E(G)\backslash M$.
It was revealed \cite{Adams-2004,Riddle-2002} that
a subset $S\subseteq M$ is a forcing set of $M$ if and only if
$S$ contains at least one edge of each $M$-alternating cycle of $G$.
This implies a simple inequality $f(G,M)\geq c(G,M)$,
where $c(G,M)$ denotes the maximum number of disjoint $M$-alternating cycles in $G$.
In the case where $G$ is a plane bipartite graph, Pachter and Kim \cite{Pachter-1998} observed that
these two numbers are always equal to each other.

\begin{thm}\label{Forcing-feedback}
\cite{Pachter-1998}
Let $G$ be a plane bipartite graph with a perfect matching $M$. Then $$f(G,M) = c(G,M).$$
\end{thm}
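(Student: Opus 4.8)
The inequality $f(G,M)\ge c(G,M)$ is already recorded above, so the whole content is the reverse inequality, and the plan is to produce a forcing set of size exactly $c(G,M)$ by recasting everything as a statement about directed cycles in a planar digraph. Fix a plane embedding of $G$ and a $2$-colouring of $V(G)$ into black and white vertices, and orient $G$ into a digraph $D$ by directing every edge of $M$ from its white end to its black end and every edge outside $M$ from its black end to its white end. A one-line check at each vertex shows that a closed walk is directed in $D$ exactly when it uses edges alternately in $M$ and outside $M$; hence the directed cycles of $D$ are precisely the $M$-alternating cycles of $G$, each taken with one of its two senses.

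Next I would match up the two sides with digraph parameters of $D$. For $M$-alternating cycles the notions ``disjoint'', ``vertex-disjoint'' and ``edge-disjoint'' all coincide, since two $M$-alternating cycles through a common vertex $v$ both use the unique $M$-edge at $v$, so a shared vertex forces a shared edge; consequently $c(G,M)$ equals the maximum number $\nu(D)$ of pairwise edge-disjoint directed cycles of $D$. On the other side, a set $S\subseteq M$ is a forcing set of $M$ iff it meets every $M$-alternating cycle, i.e.\ iff $S$ is a feedback arc set of $D$ lying inside $M$; and any feedback arc set can be pushed inside $M$ without growing, because a directed cycle using a non-$M$ arc $(b,w)$ must enter $b$ through the $M$-edge at $b$ (the only arc of $D$ entering $b$), so replacing $(b,w)$ by that $M$-edge preserves the feedback property and strictly decreases the number of non-$M$ arcs used. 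Hence the minimum feedback arc set of $D$ may be taken inside $M$, so $f(G,M)$ equals the minimum size $\tau(D)$ of a feedback arc set of $D$.

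It then remains to prove the purely digraph-theoretic identity $\tau(D)=\nu(D)$, and this is where planarity is used. Passing to the plane dual $D^{*}$ with the induced orientation of dual arcs, planar duality turns directed cycles of $D$ into directed bonds (minimal dicuts) of $D^{*}$, feedback arc sets of $D$ into dijoins (sets of arcs meeting every dicut) of $D^{*}$, and edge-disjoint families into edge-disjoint families. The Lucchesi--Younger theorem applied to $D^{*}$ says that the minimum size of a dijoin equals the maximum number of pairwise edge-disjoint dicuts, which translated back is exactly $\tau(D)=\nu(D)$. Chaining the three reductions gives $f(G,M)=\tau(D)=\nu(D)=c(G,M)$. (Bridges of $G$ lie in no cycle and correspond to loops of $D^{*}$, so they can be deleted at the outset and are irrelevant.)

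The step I expect to be the real obstacle is the digraph identity $\tau(D)=\nu(D)$: for arbitrary digraphs the minimum feedback arc set can be arbitrarily larger than the maximum number of edge-disjoint directed cycles, so planarity is essential, and handling it cleanly means either invoking Lucchesi--Younger through planar duality as above, or reproving the planar case directly (e.g.\ by a Menger/flow argument on $D^{*}$). The other ingredients — the bijection between directed cycles of $D$ and $M$-alternating cycles of $G$, the coincidence of the disjointness notions, and the ``push into $M$'' argument — are short verifications, though one must be careful with orientations when dualising so that directed cycles correspond to directed cuts rather than to arbitrary cuts.
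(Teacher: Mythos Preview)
The paper does not prove this theorem; it is quoted from Pachter and Kim (reference \cite{Pachter-1998}) purely as background, so there is no in-paper argument to compare against. Your sketch is correct and is in fact essentially the original Pachter--Kim proof: orient $G$ so that $M$-alternating cycles become exactly the directed cycles of a planar digraph $D$, identify $f(G,M)$ with the minimum feedback arc set $\tau(D)$ (using the ``push into $M$'' replacement, which works because each black vertex has a unique in-arc) and $c(G,M)$ with the maximum packing $\nu(D)$ of arc-disjoint directed cycles (using that two $M$-alternating cycles sharing a vertex share its $M$-edge), and then obtain $\tau(D)=\nu(D)$ from the Lucchesi--Younger theorem applied to the planar dual $D^{*}$. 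The only point worth a second look is the last step: make sure your orientation convention for dual arcs is fixed so that directed cycles of $D$ correspond bijectively to directed bonds of $D^{*}$ (and hence feedback arc sets to dijoins), but with that convention in place the reduction is standard and airtight.
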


A more general result on bipartite graphs due to B. Guenin and R. Thomas is given as follows; see  \cite[Corollary 5.8]{Guenin-2011}.

\begin{thm}\cite{Guenin-2011}
 Let $G$ be a bipartite graph, and let $M$ be a perfect matching in $G$. Then
$G$ has no matching minor isomorphic to $K_{3,3}$ or the Heawood graph if and only if
$f(G',M') = c(G',M')$ for every subgraph $G'$ of $G$ such that $M' =
M\cap E(G')$ is a perfect matching in $G'$.
\end{thm}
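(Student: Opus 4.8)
The plan is to reformulate everything in terms of digraphs, where the statement becomes exactly the Guenin--Thomas theorem on packing directed circuits.

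First I would set up the classical correspondence between $(G,M)$ and a digraph $D=D(G,M)$. Since $G$ is bipartite with parts $A,B$ and $M$ is a perfect matching, contract each edge of $M$ to a single vertex, and orient every non-matching edge from the vertex obtained from the matching edge at its $B$-end toward the vertex obtained from the matching edge at its $A$-end. A direct check gives three facts: the $M$-alternating cycles of $G$ correspond bijectively to the directed circuits of $D$; a set $S\subseteq M$ is a forcing set of $M$ precisely when the corresponding set of vertices of $D$ meets every directed circuit, that is, is a feedback vertex set of $D$; hence $f(G,M)=\tau(D)$ and $c(G,M)=\nu(D)$, where $\tau(D)$ is the minimum size of a feedback vertex set and $\nu(D)$ the maximum number of pairwise vertex-disjoint directed circuits. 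Moreover, passing from $(G,M)$ to a subgraph $G'$ with $M'=M\cap E(G')$ a perfect matching, possibly followed by bicontracting degree-$2$ vertices (as in the definition of matching minor), corresponds on the digraph side to taking a butterfly minor of $D$; and under this dictionary the two forbidden matching minors become two small digraph obstructions, $T$ --- the bidirected triangle, three vertices with an arc in each direction between each pair, the image of $K_{3,3}$ --- and $F$, a particular $7$-vertex digraph built from the Fano plane, the image of the Heawood graph. So the theorem is equivalent to: $\tau(D')=\nu(D')$ for every butterfly minor $D'$ of $D$ if and only if $D$ has no butterfly minor isomorphic to $T$ or $F$.

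The ``only if'' direction is routine. One computes $\tau(T)=2>1=\nu(T)$ --- equivalently $f(K_{3,3},M')=2$ while $c(K_{3,3},M')=1$, since every two $M'$-alternating cycles of $K_{3,3}$ share a matching edge --- and $\tau(F)>\nu(F)$ by a similar small count. Since $\tau(D)-\nu(D)$ does not increase under a butterfly contraction (one checks $\tau(D/e)\le\tau(D)$ and $\nu(D/e)\ge\nu(D)$ for each butterfly-contractible arc $e$), if a subgraph $G'$ of $G$ gives a digraph $D(G',M')$ that butterfly-contracts to $T$ or to $F$, then already $\tau(D(G',M'))>\nu(D(G',M'))$, i.e.\ $f(G',M')>c(G',M')$; thus a $G$ with a $K_{3,3}$ or Heawood matching minor fails the min--max equality on some subgraph.

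The substance is the ``if'' direction: assuming $D$ has no butterfly minor isomorphic to $T$ or $F$, show $\tau(D')=\nu(D')$ for every butterfly minor $D'$ of $D$. This is the hard theorem of Guenin and Thomas~\cite{Guenin-2011} on packing directed circuits. Its proof analyses a minimal counterexample $D$ (minimal in $|V(D)|+|E(D)|$): one reduces to $D$ strongly connected, uses the exclusion of a $T$-minor to control tightly how the directed circuits through a fixed vertex overlap, and then invokes the clutter-theoretic machinery for the clutter of (vertex sets of) directed circuits --- that it has the max-flow--min-cut (Mengerian) property, in the spirit of the work of Seymour on Mengerian clutters, together with the structure theory developed around Younger's conjecture by Reed, Robertson, Seymour and Thomas --- to produce from $D$ either a packing of $\nu(D)$ vertex-disjoint circuits or a forbidden minor $T$ or $F$, a contradiction. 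This last step is the main obstacle: it has no short proof and is the technical core of the cited work. The rest --- the bipartite--digraph translation, the identification of $T$ and $F$, and the base-case computations --- is bookkeeping once the correspondence is set up carefully. (In particular, since $K_{3,3}$ and the Heawood graph are non-planar, applying the theorem to plane bipartite graphs recovers Theorem~\ref{Forcing-feedback}.)
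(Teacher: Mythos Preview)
The paper does not prove this theorem at all: it is quoted verbatim as a background result, attributed to Guenin and Thomas and referenced as \cite[Corollary~5.8]{Guenin-2011}. So there is no ``paper's own proof'' to compare your proposal against.

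Your proposal is a fair high-level outline of the Guenin--Thomas argument itself: the standard bipartite-graph-with-perfect-matching to digraph dictionary, the identification of $f$ and $c$ with $\tau$ and $\nu$, the matching-minor to butterfly-minor translation, and then an appeal to the hard packing theorem of \cite{Guenin-2011}. As you acknowledge, the substantive ``if'' direction is not proved here but deferred entirely to that reference; what you have written is essentially the reduction that shows why the stated theorem is a corollary of their digraph result, plus the easy direction. That is appropriate for a cited result, but you should be aware that it is not a self-contained proof --- and since the present paper never claims to give one, none is expected.
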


In 2007, the anti-forcing number of a graph
was  introduced by Vuki\v{c}evi\'{c} and Trinajsti\'{c} \cite{Vukicevic-2007} as
  the smallest number of edges whose
removal results in a subgraph with a unique perfect matching. In an early paper, Li
\cite{Li-1997} proposed a forcing single edge
(i.e. anti-forcing edge) of a graph, which belongs to all but one perfect matching.
For other researches on this topic, see Refs
\cite{Che-Chen-2011,Deng-2007,Vukicevic-2008,Bian-2011}.

More recently, by an analogous manner as the forcing number, Klein and Rosenfeld \cite{Klein-Rosenfeld-2014} and
Lei et al. \cite{Lei-Zhang} independently defined the anti-forcing number of a single perfect matching in a graph.
Let $M$ be a perfect matching of a graph $G$. A subset $S\subseteq E(G)\backslash M$ is
called an \emph{anti-forcing set} of $M$ if $G-S$ has a unique perfect matching $M$,
where $G-S$ denotes the graph obtained from $G$ by deleting all edges in $S$.
The following lemma shows an equivalent condition.

\begin{lem}\label{Anti-forcing-cycle}
\cite{Lei-Zhang}
Let $G$ be a graph and  $M$ be a perfect matching of $G$.
A subset $S$ of $E(G)\backslash M$ is an anti-forcing set of $M$ if and only if
$S$ contains at least one edge of every $M$-alternating cycle.
\end{lem}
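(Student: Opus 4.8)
The plan is to establish the two directions separately, each time relying on the elementary fact that the symmetric difference $M \triangle M'$ of two perfect matchings of a graph induces a subgraph of maximum degree $2$, hence is a vertex-disjoint union of cycles that alternate with respect to both $M$ and $M'$. This is precisely the anti-forcing counterpart of the characterization of forcing sets recalled above, and the argument runs in parallel.

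For the \emph{only if} direction I would argue contrapositively. Suppose some $M$-alternating cycle $C$ of $G$ contains no edge of $S$. Because $S \subseteq E(G) \setminus M$ we also have $S \cap M = \emptyset$, so every edge of $C$ together with every edge of $M$ lies in $G - S$; consequently $M \triangle E(C)$ is a perfect matching of $G - S$, and it differs from $M$ since $C$ has at least one edge outside $M$. Thus $G - S$ does not have $M$ as its unique perfect matching, i.e., $S$ is not an anti-forcing set. For the \emph{if} direction, assume $S$ meets every $M$-alternating cycle and let $M'$ be any perfect matching of $G - S$. Since both $M$ and $M'$ are subsets of $E(G) \setminus S$, so is $M \triangle M'$; if $M' \ne M$, then $M \triangle M'$ would contain an $M$-alternating cycle $C$ none of whose edges lie in $S$, contradicting the hypothesis. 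Hence $M' = M$, so $M$ is the unique perfect matching of $G - S$ and $S$ is an anti-forcing set.

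I do not anticipate any real obstacle here: there are no cases to distinguish and no estimates to make. The single point requiring attention — and the reason the definition restricts to $S \subseteq E(G) \setminus M$ — is to check at each step that the alternating cycle produced actually lies inside $G - S$; this is immediate once one observes that $S$ is disjoint from $M$ and from every perfect matching of $G - S$.
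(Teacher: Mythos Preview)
Your argument is correct and is exactly the standard symmetric-difference proof one would expect. Note, however, that the paper does not actually prove this lemma: it is quoted with a citation to \cite{Lei-Zhang} and used as a black box, so there is no in-paper proof to compare against. Your write-up would serve perfectly well as the omitted justification.
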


The minimum cardinality of anti-forcing sets of $M$ is called the \emph{anti-forcing number} of $M$, denoted by $af(G,M)$. From these concepts, we can see that the anti-forcing number
of a graph $G$ is just the minimum value of anti-forcing numbers of all perfect matchings of $G$.
The \emph{maximum anti-forcing number} of $G$ is  the maximum value of anti-forcing numbers
of all perfect matchings of $G$, denoted by $Af(G)$. Two sharp upper bounds on maximum anti-forcing number and anti-forcing spectrum of a graph, we may refer to recent refs. \cite{DengZhang17b, shiZ2017, DK2015}.

Given a graph $G$ with a perfect matching $M$, two $M$-alternating cycles of $G$
are said to be \emph{compatible} if they either are disjoint or intersect only at edges in $M$.
A set $\mathcal{A}$ of pairwise compatible $M$-alternating cycles of $G$ is called
a \emph{compatible $M$-alternating set}.
Let $c'(G,M)$ denote the maximum cardinality of compatible $M$-alternating sets of $G$. We also have $af(G,M)\geq c'(G,M)$.
For plane bipartite graphs $G$, Lei et al. \cite{Lei-Zhang} established the equality.

\begin{thm}\label{Anti-forcing-feedback}
\cite{Lei-Zhang}
Let $G$ be a plane bipartite graph with a perfect matching of $M$. Then $$af(G,M) = c'(G,M).$$
\end{thm}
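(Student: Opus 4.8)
The inequality $af(G,M)\ge c'(G,M)$ is already explained in the excerpt, so the whole content is the reverse bound $af(G,M)\le c'(G,M)$, and the plan is to deduce it from a planar min--max theorem after encoding $M$-alternating cycles as directed cycles of an auxiliary plane digraph. First I would pass to a digraph: two-colour $V(G)$ black and white, orient every edge of $M$ from its white end to its black end and every edge of $E(G)\setminus M$ from its black end to its white end, and call the resulting plane digraph $D$. A routine check at each vertex shows that the directed cycles of $D$ are precisely the $M$-alternating cycles of $G$.

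Next I would contract all the arcs of $D$ coming from $M$; since $M$ is a matching these arcs are pairwise non-adjacent, so the contraction is harmless and keeps the graph plane. Write $\widehat D:=D/M$; its vertices are the edges of $M$ and its arcs are exactly the edges of $E(G)\setminus M$. Expanding a contracted vertex shows that the contraction induces a bijection between directed cycles of $\widehat D$ and $M$-alternating cycles of $G$ that preserves the set of non-matching edges used. Consequently, using Lemma \ref{Anti-forcing-cycle}, a set $S\subseteq E(G)\setminus M$ is an anti-forcing set of $M$ if and only if $S$ is a feedback arc set of $\widehat D$, and a family of $M$-alternating cycles is compatible if and only if the corresponding directed cycles of $\widehat D$ are pairwise arc-disjoint (the cycles may still share vertices of $\widehat D$, i.e. edges of $M$). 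Hence $af(G,M)$ equals the minimum size of a feedback arc set of the plane digraph $\widehat D$ and $c'(G,M)$ equals the maximum number of pairwise arc-disjoint directed cycles of $\widehat D$.

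Finally I would invoke the planar case of the Lucchesi--Younger theorem: in a plane digraph the minimum size of a feedback arc set equals the maximum number of pairwise arc-disjoint directed cycles. This is the planar dual of the statement ``minimum dijoin $=$ maximum number of arc-disjoint directed cuts'', directed cycles of a plane digraph corresponding to directed cuts of its planar dual under the usual quarter-turn orientation of dual arcs. Applied to $\widehat D$ it yields $af(G,M)=c'(G,M)$ at once, the case of no $M$-alternating cycle being the trivial instance $0=0$.

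I expect the genuine work to be, not difficulty, but two points of care: the bookkeeping in the contraction step (verifying that $\widehat D$ has no loops, and that parallel arcs occur only in pairs coming from $M$-alternating $4$-cycles of $G$ and cause no trouble for the theorem); and, more essentially, stating the planar Lucchesi--Younger min--max in exactly the ``feedback arc set versus arc-disjoint directed cycles'' form, since this equality genuinely fails for non-planar digraphs and the plane embedding of $G$ is precisely what rescues it. An alternative, more self-contained route would replace the last two paragraphs by induction on $|E(G)\setminus M|$: delete a non-matching edge $e$, use $af(G,M)\le af(G-e,M)+1$, and find $e$ with $c'(G-e,M)\le c'(G,M)-1$; the obstacle there is exhibiting such an $e$, i.e. a non-matching edge lying on every maximum compatible $M$-alternating set, and it is exactly at that point that planarity (via an innermost $M$-alternating cycle) must again be used essentially.
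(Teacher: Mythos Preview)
This theorem is not proved in the present paper; it is quoted from \cite{Lei-Zhang} as a known tool. Your argument is correct and is essentially the proof given in that reference: orient $G$ so that directed cycles coincide with $M$-alternating cycles, contract the matching arcs to obtain a plane digraph $\widehat D$ in which ``compatible'' becomes ``arc-disjoint'' and anti-forcing sets become feedback arc sets, and then invoke the planar dual form of the Lucchesi--Younger theorem.
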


A \emph{hexagonal system} (or benzenoid) is a 2-connected finite plane graph such that
every interior face is bounded by a regular hexagon of side length one \cite{Sachs-1984}.
It can also be formed by a cycle with its interior in the infinite hexagonal lattice
on the plane (graphene) \cite{Cyvin-book-1988}.
A hexagonal system with a perfect matching is regarded as
a molecular graph (carbon-skeleton) of a benzenoid hydrocarbon.
Hence these kinds of graphs are called
benzenoid systems and have been extensively investigated;
We may refer to a detailed review \cite{Randic-review-2003} due to Randi\'{c}.

Recently it was known \cite{Xu-2013,Lei-Zhang} that maximum forcing number and anti-forcing number of a hexagonal system are equal to the famous Clar number and Fries number respectively, which can measure the stability of polycyclic benzenoid hydrocarbons
\cite{Abeledo-2007,Clar-1972, Hansen-Zheng-1992}. The same results hold for (4,6)-fullerene graphs \cite{shiWZ2017}.

Let $H$ be a hexagonal system with a perfect matching. A subgraph $H_0$ of $H$ is called {\em nice} if $H-V(H_0)$ has a perfect matching.
A set of disjoint hexagons of $H$ is called {\em sextet pattern} if they form a nice subgraph of $H$.
The size of a maximum resonant set of $H$  is the \emph{Clar number} of $H$, denoted by $Cl(H)$.


\begin{thm}\label{HS-F(H)-Cl(H)}
\cite{Xu-2013}
 Let $H$ be a hexagonal system with a perfect matching. Then $F(H)=Cl(H)$.
\end{thm}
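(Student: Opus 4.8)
The plan is to prove the two inequalities $F(H)\ge Cl(H)$ and $F(H)\le Cl(H)$ separately; the first is routine and the second contains all of the difficulty.

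\emph{Lower bound.} Let $\{h_1,\dots,h_k\}$ be a maximum sextet pattern of $H$, so that $k=Cl(H)$ and $H-\bigcup_i V(h_i)$ has a perfect matching $M_0$. Each $h_i$ has exactly two perfect matchings; picking one of them, $M_i$, for each $i$, the union $M:=M_0\cup M_1\cup\cdots\cup M_k$ is a perfect matching of $H$ for which $h_1,\dots,h_k$ are $k$ pairwise disjoint $M$-alternating hexagons, hence in particular $k$ pairwise disjoint $M$-alternating cycles. Thus $c(H,M)\ge k$, and by Theorem~\ref{Forcing-feedback} $F(H)\ge f(H,M)=c(H,M)\ge k=Cl(H)$.

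\emph{Upper bound.} By Theorem~\ref{Forcing-feedback} it is enough to show that for every perfect matching $M$ of $H$, any family $\mathcal C=\{C_1,\dots,C_m\}$ of pairwise disjoint $M$-alternating cycles of $H$ has $m\le Cl(H)$; I would do this by transforming $\mathcal C$ into a sextet pattern of $H$ of size $m$. Two observations drive the transformation. First, if $C$ is an $M$-alternating cycle, then the subgraph $H_C$ consisting of $C$ together with all hexagons of $H$ inside $C$ is again a hexagonal system, and $M_C:=M\cap E(H_C)$ is a perfect matching of it for which $C$ is the (alternating) perimeter --- this is because, $C$ being $M$-alternating, the vertices strictly inside $C$ are matched by $M$ among themselves. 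Second, a hexagonal system whose perimeter is alternating with respect to some perfect matching must contain an alternating hexagon. Granting the second observation, I proceed by induction on $\sum_i\bigl(|E(C_i)|-6\bigr)$: if this is $0$, every $C_i$ is a hexagon, and then $\{C_1,\dots,C_m\}$ is a sextet pattern because $M\setminus\bigcup_i\bigl(M\cap E(C_i)\bigr)$ is a perfect matching of $H-\bigcup_i V(C_i)$; otherwise some $C_i$ is not a hexagon, and --- taking such a $C_i$ that is innermost among the non-hexagonal cycles with respect to the laminar family of enclosed regions --- we use the two observations to replace $C_i$ by an $M$-alternating hexagon lying inside it, after which $\sum_i(|E(C_i)|-6)$ has strictly decreased and the induction continues.

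The hard part is the second observation, that a hexagonal system with an alternating perimeter has an alternating hexagon; this is precisely where planarity and the hexagonal structure enter (it is false for general plane bipartite graphs). I expect to establish it by an inductive peeling argument: either find a hexagon on the boundary whose deletion leaves a smaller hexagonal system still having an alternating perimeter, or locate an alternating hexagon directly by tracking the alternation pattern along the perimeter near a convex corner. A secondary point demanding care, when the $C_i$ are nested (or share edges with one another after a replacement), is to check that the hexagon substituted for $C_i$ can be chosen disjoint from the remaining cycles; here the elementary fact that two adjacent hexagons cannot both be $M$-alternating unless their common edge lies in $M$ is useful. An alternative, more computational route to the upper bound would be to appeal to the known integrality of the linear-programming relaxation of the Clar number for benzenoid systems and dualize it against a fractional packing of $M$-alternating cycles.
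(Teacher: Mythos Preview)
Your lower bound is fine. The upper bound, however, contains a genuine gap: your plan is to show that any family of $m$ pairwise disjoint $M$-alternating cycles can be converted into $m$ pairwise disjoint $M$-alternating \emph{hexagons} (which would then be a sextet pattern of size $m$). This intermediate claim is false for a general perfect matching $M$. The paper itself supplies the counterexample (Figure~\ref{fig:counterexample-1}(a)): for the indicated perfect matching $M$ of coronene one has $f(H,M)=c(H,M)=2$, so two disjoint $M$-alternating cycles exist, yet there is exactly one $M$-alternating hexagon. The nested configuration you single out as ``demanding care'' is precisely where the replacement fails, and the remark about adjacent $M$-alternating hexagons does not repair it.

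This does not contradict Theorem~\ref{HS-F(H)-Cl(H)}, because a sextet pattern of size $m$ need not consist of hexagons that are alternating for the \emph{given} $M$; one may pass to a different perfect matching. The paper does not give its own proof of the theorem but cites Xu et al., noting only that their argument hinges on Zheng and Chen's lemma: if $K$ is a resonant set and $H-K$ has at least two perfect matchings, then $Cl(H)\ge |K|+1$. That lemma is exactly what lets one enlarge a resonant set after changing the matching---the step your direct cycle-to-hexagon replacement cannot supply. (What you are effectively aiming at, namely that for $M$ attaining $F(H)$ a maximum family of disjoint $M$-alternating cycles can be taken to consist of hexagons, is the content of Theorem~\ref{HS-F(H)-Cl(H)-stronger}, whose proof is considerably more delicate.) Your alternative via the Abeledo--Atkinson LP integrality is a legitimate route, but it is a separate argument, not a patch for the cycle-replacement plan.
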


Xu et al. \cite{Xu-2013} obtained the theorem by using   Zheng and Chen's result \cite{Zheng-1985} that if  $H- K$
has at least two different perfect matchings for a resonant set $K$ of $H$, then $Cl(H)\geq |K|+1$.
By rising  this bound by one, the present authors obtained a stronger result.

\begin{thm}\label{HS-F(H)-Cl(H)-stronger}
\cite{Zhou-Zhang-2015}
Let $H$ be a hexagonal system with a perfect matching.
For every perfect matching $M$ of $H$ with forcing number $F(H)$,
$H$ has $F(H)$ disjoint $M$-alternating hexagons.
\end{thm}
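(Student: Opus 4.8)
\medskip
\noindent\textbf{Proof strategy.}\quad
The idea is to combine Pachter and Kim's identity $f(H,M)=c(H,M)$ (Theorem~\ref{Forcing-feedback}) with an analysis of a \emph{minimum-size} family of disjoint $M$-alternating cycles. I would in fact prove a little more, by induction on the number of hexagons of $H$: \emph{for every perfect matching $M$ of a hexagonal system $H$, the maximum number of pairwise disjoint $M$-alternating hexagons equals $c(H,M)$.} Applied to a forcing-optimal $M$ this is precisely Theorem~\ref{HS-F(H)-Cl(H)-stronger}, since then $c(H,M)=f(H,M)=F(H)$. One inequality is trivial, and if $H$ is a single hexagon or has a unique perfect matching (so $c(H,M)=0$) the statement is immediate, which settles the base.

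For the inductive step, let $\mathcal{C}$ be a family of $c(H,M)$ pairwise disjoint $M$-alternating cycles minimizing $\sum_{C\in\mathcal{C}}|V(C)|$; I claim every member is a hexagon. The cycles of $\mathcal{C}$ are nested in a laminar fashion, so if the claim fails, pick $C\in\mathcal{C}$ non-hexagonal with no non-hexagon of $\mathcal{C}$ strictly inside it, and let $h_1,\dots,h_t$ ($t\ge 0$) be the hexagons of $\mathcal{C}$ inside $C$. Since $C$ is $M$-alternating, all its vertices are matched along $C$, so the hexagonal system $H_C$ bounded by $C$ together with its interior inherits the perfect matching $M_C:=M\cap E(H_C)$, with $C$ still $M_C$-alternating. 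Then $c(H_C,M_C)=t+1$: it is at least $t+1$ by $\{C,h_1,\dots,h_t\}$, and it cannot be at least $t+2$, since adjoining to a witnessing family in $H_C$ the cycles of $\mathcal{C}$ lying outside $C$ (which are vertex-disjoint from $H_C$) would give more than $c(H,M)$ pairwise disjoint $M$-alternating cycles in $H$. If $H_C\ne H$, the induction hypothesis supplies $t+1$ pairwise disjoint $M_C$-alternating hexagons inside $H_C$; replacing $C,h_1,\dots,h_t$ by them keeps a family of size $c(H,M)$ but strictly lowers the vertex total (a non-hexagonal cycle of a hexagonal system has more than six vertices), contradicting minimality.

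This leaves the case $H_C=H$: $C$ is the ($M$-alternating) perimeter of $H$ and $\mathcal{C}=\{C,h_1,\dots,h_t\}$. Here I would bring in the resonant-set ideas behind Theorem~\ref{HS-F(H)-Cl(H)}. The set $\{h_1,\dots,h_t\}$ is resonant, and the subgraph $H-V(h_1\cup\cdots\cup h_t)$ still contains the perimeter $C$, alternating for the restricted matching, so it has a second perfect matching; Zheng and Chen's lemma~\cite{Zheng-1985} then gives $Cl(H)\ge t+1$, whence $Cl(H)=F(H)=c(H,M)=t+1$ by Theorem~\ref{HS-F(H)-Cl(H)}. What closes the argument, and what ``raising Zheng and Chen's bound by one'' should mean, is the strengthened assertion that a set of pairwise disjoint $M$-alternating hexagons whose deletion leaves more than one perfect matching extends to one more $M$-alternating hexagon; applied to $\{h_1,\dots,h_t\}$ it produces $t+1$ pairwise disjoint $M$-alternating hexagons of $H$ and the same contradiction with minimality, finishing the induction.

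The step I expect to be the real obstacle is this strengthened lemma, and beneath it the structural fact that \emph{a hexagonal system with more than one hexagon whose perimeter is $M$-alternating contains an $M$-alternating hexagon}. For the structural fact I would induct on the number of hexagons: peel off a suitable boundary hexagon $s$; if $s$ is $M$-alternating we are done, and otherwise remove the part of $s$ not shared with the rest of $H$ and apply induction, the care being to verify — according to the $M$-status of the edges joining $s$ to $H$ — that the restricted matching is perfect and the new perimeter is still $M$-alternating. For the strengthened lemma, given such a family $K$ and a second perfect matching $N$ of $H-V(K)$, I would take an innermost $N$-alternating cycle $D$ of $H-V(K)$: if $D$ is a hexagon it is the desired new member, and otherwise $D$ is $M$-alternating in $H$ and the structural fact applies to the region it bounds; the delicate point is that $D$ may encircle some hexagons of $K$, so one must work inside $D$ with those hexagons removed and recurse inward to obtain a hexagon genuinely disjoint from all of $K$.
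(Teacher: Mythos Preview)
Your central inductive hypothesis is false, and the paper itself supplies the counterexample. You propose to prove, for \emph{every} perfect matching $M$ of a hexagonal system $H$, that the maximum number of pairwise disjoint $M$-alternating hexagons equals $c(H,M)$. But immediately after stating Theorem~\ref{HS-F(H)-Cl(H)-stronger} the paper exhibits a perfect matching $M$ of coronene with $f(H,M)=c(H,M)=2$ and exactly one $M$-alternating hexagon. This single example kills both your induction and your ``strengthened lemma'': with $K$ the central hexagon, $H-V(K)$ is the outer $18$-cycle, which has two perfect matchings, yet there is no second $M$-alternating hexagon to adjoin to $K$. Your sketched recursion for that lemma does not terminate here either: the only $M$-alternating cycle in $H-V(K)$ is the perimeter itself, and ``working inside it with $K$ removed'' returns you to the same $18$-cycle.

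The difficulty is structural: when you pass from $H$ to the subsystem $H_C$, the restricted matching $M_C$ need not have maximum forcing number in $H_C$, so a hypothesis valid only for forcing-optimal matchings cannot be invoked; and the version valid for all matchings is simply untrue. The actual proof (in \cite{Zhou-Zhang-2015}, merely cited in this paper) does not stay with the given $M$. What ``raising Zheng and Chen's bound by one'' means there is the Clar-number inequality $Cl(H)\ge |K|+2$ whenever $H-K$ has at least two perfect matchings, not the existence of an extra $M$-alternating hexagon. The argument then proceeds by contradiction: if a forcing-optimal $M$ had fewer than $F(H)$ disjoint $M$-alternating hexagons, a suitable resonant set $K$ with $H-K$ still non-uniquely matchable would yield $Cl(H)\ge |K|+2>F(H)$, contradicting Theorem~\ref{HS-F(H)-Cl(H)}. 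The crucial maneuver, as in Lemma~\ref{Crucial-lemma} of the present paper, is to \emph{change the perfect matching} along the boundary structure to manufacture the two extra resonant hexagons; your scheme, by trying to find all hexagons alternating for the original $M$, asks for something that cannot exist in general.
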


However,  Theorem  \ref{HS-F(H)-Cl(H)-stronger} does not necessarily hold  for perfect matchings of $H$
with the  forcing number $F(H)-1$.
For example,  the Coronene (see  Figure \ref{fig:counterexample-1}(a)) has the maximum forcing number 3.
For the specific perfect matching of Coronene marked by the bold lines, it
has forcing number two, but it has only one alternating hexagon.

\begin{figure}[H]
\begin{center}
\includegraphics[height=3cm]{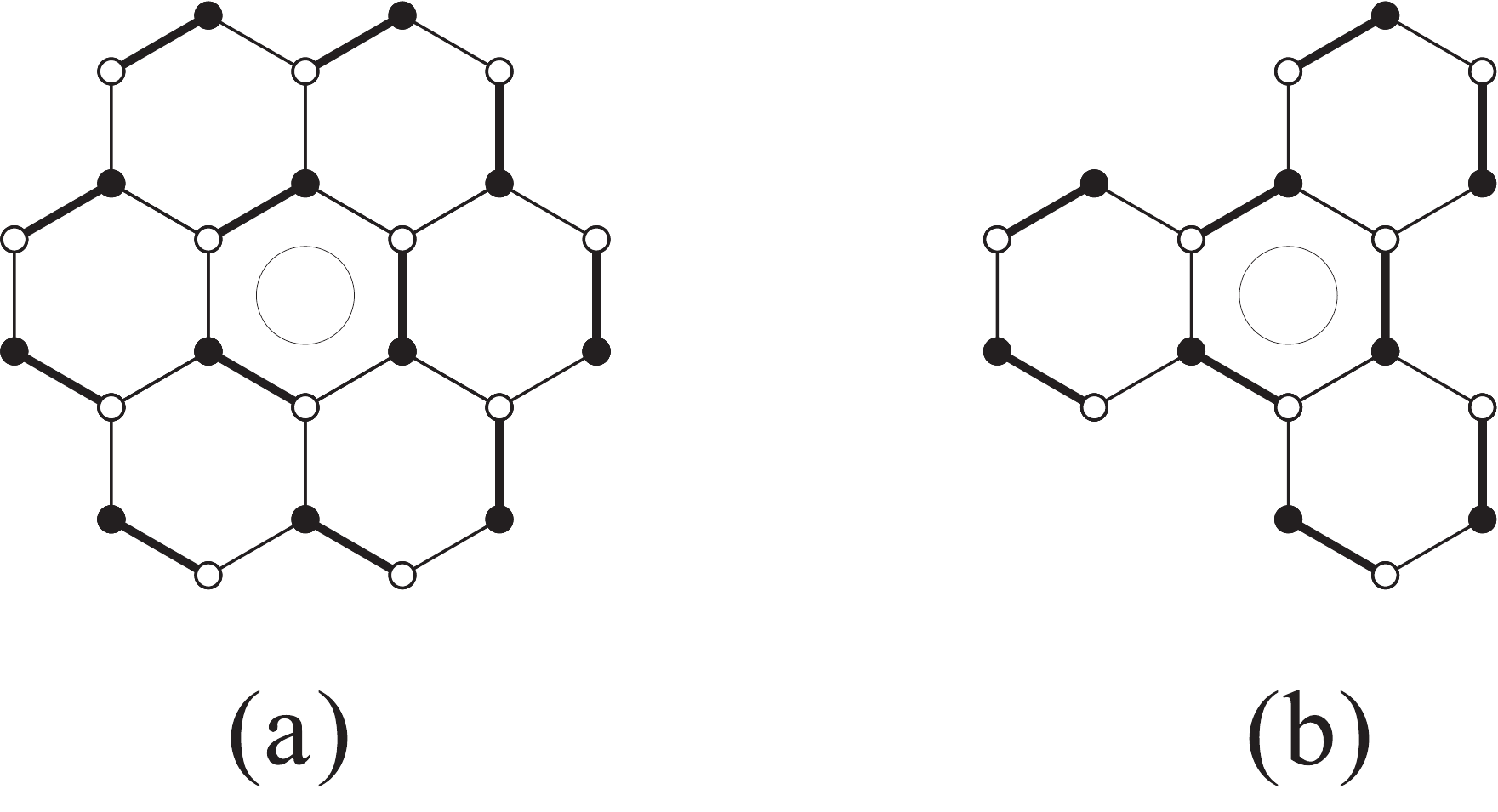}
\caption{{\small (a) Coronene}, and (b) Triphenylene.}
\label{fig:counterexample-1}
\end{center}
\end{figure}

For a hexagonal system $H$ with a perfect matching $M$, let $fr(H,M)$ denote the number of
all $M$-alternating hexagons in $H$.  The maximum value of $fr(H,M)$
over all perfect matchings $M$ is just the {\em Fries number} of $H$, denoted by $Fr(H)$.
Since all $M$-alternating hexagons of $H$ are compatible,
Theorem \ref{Anti-forcing-feedback} implies $af(H,M)=c'(H,M)\geq fr(H,M)$.
The second equality does not  hold in general.
For example, the bold lines of Triphenylene in Figure \ref{fig:counterexample-1}(b)
constitute a perfect matching with anti-forcing number two,
whereas it has only one alternating hexagon.
However, Lei et al. \cite{Lei-Zhang} obtained the following result by finding a perfect matching $M$
with the equality.

\begin{thm}\label{HS-F(H)-Fries(H)}
\cite{Lei-Zhang}
Let $H$ be a hexagonal system with a perfect matching. Then $$Af(H)=Fr(H).$$
\end{thm}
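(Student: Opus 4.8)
The plan is to prove the two inequalities $Af(H)\ge Fr(H)$ and $Af(H)\le Fr(H)$ separately; the first is immediate, and the second is where the real work lies. For $Af(H)\ge Fr(H)$ I would pick a perfect matching $M$ with $fr(H,M)=Fr(H)$ and observe that any two $M$-alternating hexagons of $H$ share at most one edge, and if they do share an edge it must belong to $M$ (otherwise a common vertex would be matched by two $M$-edges). Hence all $M$-alternating hexagons form a compatible $M$-alternating set, so by Theorem \ref{Anti-forcing-feedback} we get $af(H,M)=c'(H,M)\ge fr(H,M)=Fr(H)$, and therefore $Af(H)\ge af(H,M)\ge Fr(H)$.

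For the reverse inequality the plan is to fix a perfect matching $M$ attaining $af(H,M)=Af(H)$; by Theorem \ref{Anti-forcing-feedback} there is a compatible $M$-alternating set $\mathcal A$ with $|\mathcal A|=Af(H)$. It suffices to produce a perfect matching $M^{*}$ of $H$ admitting $|\mathcal A|$ pairwise compatible $M^{*}$-alternating \emph{hexagons}, for then $Fr(H)\ge fr(H,M^{*})\ge|\mathcal A|=Af(H)$. I would obtain such an $M^{*}$ by a descent argument: among all pairs $(N,\mathcal B)$ in which $\mathcal B$ is a compatible $N$-alternating set of size $Af(H)$, choose one optimal for the lexicographic criterion ``first minimise the number of non-hexagonal members of $\mathcal B$, then minimise $\Phi(N,\mathcal B):=\sum_{C\in\mathcal B}|\mathrm{int}(C)\cap V(H)|$'', where $\Phi$ counts the vertices strictly enclosed by members of $\mathcal B$. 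The claim to be proved is that in this optimal pair every member of $\mathcal B$ is a hexagon; then $M^{*}=N$ works.

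To prove the claim, suppose some $C_{0}\in\mathcal B$ is not a hexagon, and take $C_{0}$ innermost among the non-hexagonal members of $\mathcal B$, so that every member of $\mathcal B$ lying strictly inside $C_{0}$ is a hexagon. The closed region bounded by $C_{0}$ is again a hexagonal system, carrying the restriction of $N$, with $C_{0}$ as its $N$-alternating boundary. The core step is a local surgery showing that such an alternating boundary can always be ``shrunk'': there is a hexagon $s$ meeting $C_{0}$ along a non-matching edge and avoiding the interiors of the hexagonal members of $\mathcal B$ inside $C_{0}$, such that rotating $N$ along a short $N$-alternating cycle through $s$, supported inside $C_{0}$, turns $C_{0}$ into a strictly smaller $N'$-alternating cycle $C_{0}'$ while leaving $N'=N$ outside; this would use the hexagonal structure in the same spirit as the proof of Theorem \ref{HS-F(H)-Cl(H)-stronger} and the Zheng--Chen bound, namely that a proper fragment with alternating boundary contains a ``reducible'' hexagon next to that boundary. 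One then checks that $C_{0}'$ is still compatible with every other member of $\mathcal B$ — members outside $C_{0}$ are untouched, and members inside are handled by innermostness together with compatibility — so $(N',(\mathcal B\setminus\{C_{0}\})\cup\{C_{0}'\})$ is again admissible but has either fewer non-hexagonal members or the same number and strictly smaller $\Phi$, contradicting optimality.

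The main obstacle will be exactly this surgery: proving that a non-hexagonal alternating boundary can always be reduced by a matching rotation confined to its interior, and, more delicately, arranging that rotation to avoid the interiors of the members of $\mathcal B$ that lie \emph{inside} $C_{0}$, which we are not allowed to discard. I expect this to require peeling hexagons off $C_{0}$ one boundary edge at a time, perhaps a finer invariant than the crude count $\Phi$, and a case analysis of how $C_{0}$ winds around the hexagons it encloses; the facts that a hexagonal system has no holes and that every interior face is a hexagon are what should make such a reduction terminate. Should this route prove unwieldy, an alternative for the hard direction is to pass through the known linear-programming descriptions of the Clar and Fries numbers of benzenoid systems, but the combinatorial descent above is more self-contained and parallels the known proof that $F(H)=Cl(H)$.
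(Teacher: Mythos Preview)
The theorem is quoted from \cite{Lei-Zhang} and is not proved in the present paper; the only hint given is that Lei, Yeh and Zhang ``obtained the following result by finding a perfect matching $M$ with the equality'' $af(H,M)=fr(H,M)$. Your argument for $Af(H)\ge Fr(H)$ is correct and coincides with the observation the paper records just before the theorem. For the reverse inequality your descent scheme has the right shape, but the step you yourself call ``the main obstacle'' is a genuine gap, not a routine detail: peeling a single hexagon $s$ off $C_0$ while preserving compatibility need not be possible, since every candidate $s$ on the boundary of $I[C_0]$ may share an $M$-single edge with a hexagonal member of $\mathcal B$ already inside $C_0$, or may force the new cycle $C_0'$ to meet an outside member of $\mathcal B$ along an $M$-single edge, and nothing in your setup excludes all such conflicts at once.

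This paper's own Lemma~\ref{Crucial-lemma} (together with Theorem~\ref{containment-relation}(1)) in fact re-derives $Af(H)\le Fr(H)$, and its proof shows what your reduction is missing. Rather than peeling one hexagon, the argument locates two ladder-shaped subchains $G,G'$ along the boundary of $I[C_0]$ (Claims~3--5), rotates \emph{all} the $M$-alternating hexagons of $G\cup G'$ simultaneously to a new matching $M'$, and replaces $C_0$ by the full collection of $M'$-alternating hexagons in $G\cup G'$; Claim~2 pins down exactly which other members of $\mathcal A$ can touch these hexagons, and the bookkeeping of Claim~6 shows the compatible set grows by at least two. The reduction is therefore not local to one face: it requires a structural analysis of the boundary of $I[C_0]$ and a simultaneous rotation across a whole chain of hexagons. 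Your innermost choice and lexicographic invariant set up a correct induction frame, but the inductive step needs this boundary analysis, not a single peel.
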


In this article, we show that $af(H,M) = fr(H,M)$ always holds for every perfect matching $M$
of a hexagonal system $H$ with the  maximum anti-forcing number or minus one.

\begin{thm}\label{main-thm}
Let $H$ be a hexagonal system with a perfect matching. Then for every perfect matching $M$ of $H$
with anti-forcing number $Af(H)$ or $Af(H)-1$, we have
 \begin{equation}af(H,M)=fr(H,M).\end{equation}
\end{thm}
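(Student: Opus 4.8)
The plan is to prove only the nontrivial inequality $c'(H,M)\le fr(H,M)$: as the paper recalls, Theorem~\ref{Anti-forcing-feedback} gives $af(H,M)=c'(H,M)$, and since all $M$-alternating hexagons are pairwise compatible we already have $af(H,M)=c'(H,M)\ge fr(H,M)$, so this reverse inequality yields $af(H,M)=fr(H,M)$. I would argue by contradiction, combining an extremal choice of a compatible alternating set with a local ``resolving'' lemma for non-hexagonal alternating cycles.

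So assume $fr(H,M)<c'(H,M)$, and among all maximum compatible $M$-alternating sets fix one, $\mathcal{A}$, containing as many hexagons as possible. Since $fr(H,M)<c'(H,M)=|\mathcal{A}|$, the set $\mathcal{A}$ has a non-hexagonal member; choose a non-hexagonal $C\in\mathcal{A}$ whose enclosed region $R_C$ is minimal with respect to inclusion, and let $H_C$ be the hexagonal subsystem bounded by $C$. Because $C$ is $M$-alternating, every vertex of $C$ is matched by an edge of $C$, so $M_C:=M\cap E(H_C)$ is a perfect matching of $H_C$ whose alternating perimeter is $C$, while $M\setminus E(H_C)$ is a perfect matching of $H-V(H_C)$. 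By minimality of $R_C$, the members of $\mathcal{A}$ lying inside $R_C$ are all hexagons, say $s_1,\dots,s_p$, and $\{C,s_1,\dots,s_p\}$ is a compatible $M_C$-alternating set of $H_C$, so $c'(H_C,M_C)\ge p+1$ and hence $Fr(H_C)=Af(H_C)\ge p+1$ by Theorem~\ref{HS-F(H)-Fries(H)}.

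The core is a local lemma about $H_C$. A non-hexagonal cycle of a hexagonal system has length at least $10$ and hence encloses at least two hexagons, so the annular region $A$ obtained from $\overline{R_C}$ by deleting the open hexagons $s_1,\dots,s_p$ is nontrivial; moreover the extremality of $\mathcal{A}$ forces that $A$ contains no $M$-alternating hexagon compatible with $\mathcal{A}\setminus\{C\}$ (otherwise such a hexagon could be added to $\mathcal{A}$, or traded for $C$ in $\mathcal{A}$, contradicting that $\mathcal{A}$ is maximum with the most hexagons). Using this obstruction together with $Af(H_C)=Fr(H_C)$, I would construct a perfect matching $N$ of $H_C$ and, after re-routing the few members of $\mathcal{A}$ that touch $C$, glue $N$ to $M\setminus E(H_C)$ to obtain a perfect matching $M'$ of $H$ and a compatible $M'$-alternating set $\mathcal{A}'$ with $|\mathcal{A}'|\ge|\mathcal{A}|$ but with strictly fewer non-hexagonal members than $\mathcal{A}$; iterating removes all non-hexagonal members and yields a perfect matching $M^{\ast}$ with $fr(H,M^{\ast})\ge|\mathcal{A}|+1=c'(H,M)+1$. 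Since $fr(H,M^{\ast})\le Fr(H)=Af(H)$, this already settles the case $af(H,M)=Af(H)$. For the remaining case $af(H,M)=Af(H)-1$ one needs, in the step at $C$, one \emph{extra} new alternating hexagon, i.e. $fr(H,M^{\ast})\ge c'(H,M)+2$, which then contradicts $af(H,M)=c'(H,M)\ge Af(H)-1=Fr(H)-1$.

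The main obstacle is precisely this local lemma and the accompanying surgery: describing an ``unresolvable'' annulus $A$ around $C$ sharply enough to extract the two new alternating hexagons — this is where a triphenylene is forced to appear, linking the result to the companion characterization announced in the abstract — and carrying out the re-gluing so that every surviving member of $\mathcal{A}$ stays alternating while the two honest new alternating hexagons are genuinely gained. The need for a gain of two rather than one is exactly what makes the hypothesis sharp: the triphenylene with the perfect matching in Figure~\ref{fig:counterexample-1}(b) has anti-forcing number $2=Af-2$ but only one alternating hexagon, so the conclusion fails one level below the stated range.
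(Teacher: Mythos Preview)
Your outline has the same architecture as the paper's: isolate a minimal non-hexagonal member $C$ of a maximum compatible $M$-alternating set and show that its presence permits a local rearrangement increasing the compatible-set size by at least $2$. The paper packages this as Lemma~\ref{Crucial-lemma}: whenever a non-crossing compatible $M_0$-alternating set $\mathcal{A}_0$ has two members with nested interiors, then $Af(H)\ge|\mathcal{A}_0|+2$. From this the theorem follows in one line for the \emph{original} matching $M$ (no iteration, no change of $M$): if $af(H,M)\ge Af(H)-1$, a maximum non-crossing compatible $M$-alternating set can have no nesting, so its members have pairwise disjoint interiors, each contains an $M$-alternating hexagon in its interior, and hence $fr(H,M)\ge|\mathcal{A}|=af(H,M)$.

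The genuine gap is exactly what you flag as the main obstacle: you never prove the local lemma, and the ingredients you list do not yield it. The inequality $Fr(H_C)=Af(H_C)\ge p+1$ gives only $p+1$ alternating hexagons in some perfect matching of $H_C$, matching the $p+1$ members $C,s_1,\dots,s_p$ you discard --- no gain; and the ``re-routing'' of the members of $\mathcal{A}$ sharing $M$-edges with $C$ is delicate, because flipping hexagons in $I[C]$ changes the $M$-status of boundary edges. (There is also a smaller slip: your iteration step only claims $|\mathcal{A}'|\ge|\mathcal{A}|$, yet you conclude $fr(H,M^{\ast})\ge|\mathcal{A}|+1$; the $+1$, and the $+2$ needed for the $Af(H)-1$ case, must already come from the local step.) The paper's proof of the gain-of-$2$ is substantial: it first passes to a non-crossing set (Lemma~\ref{Crossing-Noncrossing}), then performs a detailed corner analysis of $H'=I[C]$, locating two ladder-shaped subchains $G,G'$ at the left-top and left-bottom of $H'$, rotating only the proper $M$-alternating hexagons along $G\cup G'$, and verifying (Claim~6 there) that the resulting $M'$-alternating set both remains compatible with the outer members of $\mathcal{A}$ and gains at least two hexagons from the end segments of $G$ and $G'$. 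None of this structure is supplied by your sketch.
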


To prove this main result,  in Section 2  we introduce some auxiliary terms relevant to
our studies and give a crucial lemma that states that for a non-crossing compatible $M$-alternating set of $H$ with two members whose interiors have a containment relation,  the maximum anti-forcing number of $H$ is larger than the cardinality by at least two.
In Section 3, by using  this  lemma we obtain a stronger result: for any perfect matching $M$ of $H$ whose  anti-forcing number reaches the maximum value or minus one,  any two members in any given maximum non-crossing compatible $M$-alternating set of $H$ have disjoint  interiors and any member bounds a linear hexagonal chain, then give a proof of Theorem \ref{main-thm}.
 In Section 4 we give a complete characterization  to   hexagonal systems $H$ that always have   Eq. (1) for each perfect matching $M$ of $H$ by forbidding a triphenylene as nice subgraph.


\section{A crucial lemma}

In what follows, we  assume that all hexagonal systems are embedded in the plane such that
some edges parallel to each other are  vertical except for Figure \ref{fig:hexagonal-system-11}.
A \emph{peak} (resp. \emph{valley}) of a hexagonal system is a vertex whose neighbors
are below (resp. above) it.
For convenience, the vertices of a hexagonal system are colored with white and black such that
any two adjacent vertices receive different colors, and the \emph{peaks} are colored black.

Let $G$ be a plane bipartite graph with a perfect matching $M$, and
let $\mathcal{A}$ be a compatible $M$-alternating set of $G$.
Two cycles in $\mathcal{A}$ are said to be \emph{non-crossing} if
their interiors either are disjoint or have a containment relation.
Further, we say $\mathcal{A}$ is \emph{non-crossing} if any two cycles in $\mathcal{A}$ are non-crossing.
The following useful lemma was described in the first claim of the proof of {\cite[Theorem 3.1]{Lei-Zhang}}.

\begin{lem}\cite{Lei-Zhang}\label{Crossing-Noncrossing}
Let $H$ be a hexagonal system with a perfect matching $M$. Then
for any compatible $M$-alternating set $\mathcal{A}$ of $H$,
$H$ has a non-crossing compatible $M$-alternating set $\mathcal{A}'$
with $|\mathcal{A}'|=|\mathcal{A}|$.
\end{lem}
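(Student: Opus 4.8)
The plan is to repeatedly replace a crossing pair of cycles in $\mathcal{A}$ by a pairwise non-crossing family, while strictly decreasing a suitable monotone parameter, until no crossing pair remains. First I would record the local structure of a crossing pair. If $C_1,C_2\in\mathcal{A}$ are compatible and cross (their interiors overlap with no containment), then $E(C_1)\cap E(C_2)\subseteq M$, the shared edges are pairwise non-adjacent, and each vertex of $V(C_1)\cap V(C_2)$ is an end of a shared edge and has degree $3$ in $H$; this follows since $M$ has no two adjacent edges and since compatibility forbids a shared vertex without a shared $M$-edge. Fixing the canonical orientation of $H$ (each edge of $M$ directed from its white end to its black end, each other edge from black to white), every $M$-alternating cycle becomes a directed cycle consistent with this orientation, and on every shared edge $C_1$ and $C_2$ run in the same direction. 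Hence $f:=\chi_{\vec{C_1}}+\chi_{\vec{C_2}}$ is a circulation on $H$ that is nonnegative in the canonical direction, with $f(e)=2$ precisely on the shared edges.

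Next I would perform the surgery through the planar dual. Let $h$ be the face potential of $f$ (constant on the outer face, jumping by $f(e)$ across each edge $e$); equivalently $h$ is the sum of the winding numbers of $\vec{C_1}$ and $\vec{C_2}$, so its level sets are laminar (pairwise nested or disjoint). Because $f\ge 0$ in the canonical direction, the boundary of each connected component of each level set of $h$, oriented with the component on the appropriate side, is a directed cycle consistent with the canonical orientation; since $H$ has maximum degree $3$ such a boundary cannot repeat a vertex, so it is a simple cycle, and at each of its vertices it uses exactly one $M$-edge, hence it is an $M$-alternating cycle of $H$. Call these cycles $D_1,\dots,D_t$; being boundaries of laminar regions they are pairwise non-crossing, and $t\ge 2$. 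I would then replace $C_1,C_2$ in $\mathcal{A}$ by $D_1,\dots,D_t$. Each $D_i$ satisfies $E(D_i)\subseteq E(C_1)\cup E(C_2)$ and $V(D_i)\subseteq V(C_1)\cup V(C_2)$, so any edge shared between $D_i$ and another member $C$ of $\mathcal{A}$ lies in $M$; and if $D_i$ and $C$ share a vertex $w$, then $w$ is an end of an $M$-edge $e'$ shared between $C$ and $C_1$ or $C_2$, and since $D_i$ is $M$-alternating and $e'$ is the unique $M$-edge at $w$, necessarily $e'\in E(D_i)$. Thus the new family is again a compatible $M$-alternating set, and the same bookkeeping shows $D_1,\dots,D_t$ are pairwise compatible.

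Finally, termination. Writing $A=\mathrm{int}(C_1)$ and $B=\mathrm{int}(C_2)$, the $D_i$ have interiors of total area $|A|+|B|$ when $\vec{C_1},\vec{C_2}$ have the same winding sense and strictly smaller total area otherwise. I would order compatible $M$-alternating sets by the lexicographic parameter $\Phi(\mathcal{A})=\bigl(\sum_{C\in\mathcal{A}}|\mathrm{int}(C)|,\ -|\mathcal{A}|,\ -\sum_{C\in\mathcal{A}}|\mathrm{int}(C)|^2\bigr)$, and check that each surgery step strictly decreases $\Phi$, using $|A\cup B|^2+|A\cap B|^2=|A|^2+|B|^2+2\,|A\setminus B|\,|B\setminus A|$ for the third coordinate. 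As $\Phi$ is bounded below, the process halts at a non-crossing compatible $M$-alternating set of size at least $|\mathcal{A}|$, and any $|\mathcal{A}|$-element subfamily of it, being a subfamily, is still non-crossing; this is the desired $\mathcal{A}'$. I expect the main obstacle to be exactly this last step: guaranteeing that the surgery never strictly decreases cardinality, which can fail only if some $D_i$ already occurs in $\mathcal{A}$, a situation that must be excluded or absorbed by instead performing a single-cycle replacement such as $C_2\mapsto\partial(A\cap B)$ (which still lowers $\Phi$), and confirming in each case that the level-set boundaries are genuine single cycles.
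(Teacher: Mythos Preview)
The paper does not give its own proof of this lemma; it merely quotes the statement from \cite{Lei-Zhang}, noting that it is the first claim in the proof of Theorem~3.1 there. So there is nothing in the paper to compare against, and I will only comment on your argument.

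Your overall strategy is the standard and correct one: orient $H$ canonically, add the two directed cycles as a circulation, and read off the level curves of the face potential. Because $\Delta(H)\le 3$, every vertex has degree $0$ or $2$ in each level-set boundary, so those boundaries really are disjoint unions of simple $M$-alternating cycles, and the laminar structure of the level sets makes them pairwise non-crossing. Your verification that the new cycles remain compatible with every other member of $\mathcal{A}$ is also correct: any shared vertex forces a shared $M$-edge, and the $M$-edge at that vertex is unique.

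The gap you yourself flag, however, is real and your proposed patch does not close it. If both $D_1=\partial(A\cup B)$ and $D_2=\partial(A\cap B)$ already lie in $\mathcal{A}$, neither the two-for-two swap nor the single replacement $C_2\mapsto D_2$ is available, and your potential $\Phi$ does not move. A cleaner way to finish is to abandon $\Phi$ and instead choose, among all compatible $M$-alternating sets of size $|\mathcal{A}|$, one minimizing the number of crossing pairs. A short case analysis on the possible containment/disjointness relations between $I[C_j]$ and each of $A,B$ shows that any $C_j$ which is non-crossing with both $C_1$ and $C_2$ is automatically non-crossing with both $D_1$ and $D_2$, and that any $C_j$ which crosses exactly one of $C_1,C_2$ crosses at most one of $D_1,D_2$; since $(D_1,D_2)$ is itself non-crossing while $(C_1,C_2)$ is crossing, the swap strictly decreases the number of crossing pairs. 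For the duplicate issue, note that $D_1$ shares with $C_1$ an entire arc of $C_1$ lying on the outer boundary of $A\cup B$; since $A\setminus B\neq\emptyset$ this arc has length at least two and hence contains a non-$M$-edge, so $D_1$ is not compatible with $C_1$ and therefore cannot already be a member of $\mathcal{A}\setminus\{C_1,C_2\}$. The same reasoning applies to $D_2$. This removes both obstacles at once and gives a one-line termination argument in place of your three-component lexicographic order.
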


We now state a crucial lemma as follows.

\begin{lem}\label{Crucial-lemma}
For a hexagonal system $H$ with   a perfect matching $M_0$, let $\mathcal{A}_0$
be a non-crossing compatible $M_0$-alternating set of $H$.  Suppose
$\mathcal{A}_0$ has a pair of members so that their interiors have a containment relation.
Then $Af(H)\geq |\mathcal{A}_0|+2$.
\end{lem}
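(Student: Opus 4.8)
The plan is to start from a pair of members $C_1, C_2 \in \mathcal{A}_0$ whose interiors stand in a containment relation, say $\mathrm{int}(C_2) \subsetneq \mathrm{int}(C_1)$, and to choose this pair to be \emph{innermost}, i.e. no member of $\mathcal{A}_0$ has interior strictly between them (and, refining further, $C_2$ itself is chosen with minimal interior among members contained in $C_1$). Since $\mathcal{A}_0$ is non-crossing and compatible, every other member of $\mathcal{A}_0$ either lies in $\mathrm{int}(C_2)$, or is disjoint from $\mathrm{int}(C_1)$, or sits in the region $\mathrm{int}(C_1)\setminus \overline{\mathrm{int}(C_2)}$ while being non-crossing with both. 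The idea is to modify $M_0$ on the annular region between $C_1$ and $C_2$ only, producing a new perfect matching $M_1$ of $H$, in such a way that $\mathcal{A}_0$ can be replaced by a \emph{larger} compatible $M_1$-alternating set; then Theorem~\ref{Anti-forcing-feedback} (the minimax equality $af = c'$) together with $Af(H) \ge af(H,M_1) \ge c'(H,M_1)$ gives the bound.

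The core construction I would carry out: let $R$ be the subgraph of $H$ consisting of $C_1$, $C_2$, and everything in the closed annular region between them. Because $H$ is a hexagonal system and $C_1,C_2$ are $M_0$-alternating, $R$ inherits a perfect matching $M_0\cap E(R)$ of its own, and $R$ is itself (a union of) hexagonal pieces carrying an alternating structure on both boundary cycles. On such an annular benzenoid region one can find at least two ``new'' alternating cycles after switching $M_0$ along a suitable $M_0$-alternating path or cycle joining the two boundaries — concretely, I would take an $M_0$-alternating path $P$ in $R$ from a vertex of $C_2$ to a vertex of $C_1$ (such a path exists by a Hall-type / alternating-reachability argument, using that the two boundary cycles carry opposite ``orientations'' of $M_0$ as seen from inside the annulus), and flip $M_0$ along the cycle $C_2 \triangle P \triangle (\text{arc of } C_1)$ or along $C_1$ and $C_2$ themselves. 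The outcome should be a perfect matching $M_1$ agreeing with $M_0$ outside $R$, for which: (i) every member of $\mathcal{A}_0$ lying inside $C_2$ or outside $C_1$ is still $M_1$-alternating and the compatibility among them is untouched; (ii) the members between $C_1$ and $C_2$ can be similarly preserved or replaced; and most importantly (iii) the annulus now contains \emph{two} pairwise-compatible $M_1$-alternating cycles in place of the old $C_1, C_2$ — e.g. $C_1$ (or a close relative of it) plus two nested small alternating hexagons, or $C_1$, $C_2$ and one extra alternating hexagon created inside the annulus by the switch. This is the mechanism by which $|\mathcal{A}_0|$ grows by at least two.

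The step I expect to be the main obstacle is verifying that the replacement cycles are genuinely pairwise compatible with \emph{all} of $\mathcal{A}_0$, not just with $C_1$ and $C_2$, and that one really gains \emph{two} and not merely one. The delicate point is that switching $M_0$ inside the annulus can destroy the alternating property of members of $\mathcal{A}_0$ that pass through, or lie close to, the region between $C_1$ and $C_2$; the non-crossing hypothesis is exactly what should rescue this, since it forces those members to be ``parallel'' to the boundary cycles rather than transverse, so a switch supported on a path crossing the annulus disturbs at most a controlled set of them, each of which can be locally repaired. I would handle this by first reducing to the innermost configuration (so nothing of $\mathcal{A}_0$ lies strictly between $C_1$ and $C_2$ except possibly cycles nested in a chain), then doing an explicit local analysis of the benzenoid annulus: its boundary is two alternating cycles, and such a region always admits two disjoint alternating hexagons or an alternating hexagon plus an alternating cycle using the outer boundary, giving the required surplus of two. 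Collecting the preserved members, the replaced members, and the two new cycles yields a compatible $M_1$-alternating set of size $\ge |\mathcal{A}_0|+2$, hence $Af(H) = Fr(H) \ge \max_M c'(H,M) \ge |\mathcal{A}_0| + 2$ by Theorems~\ref{Anti-forcing-feedback} and~\ref{HS-F(H)-Fries(H)}.
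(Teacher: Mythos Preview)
Your proposal has a genuine gap at the key step: the mechanism you describe does not yield a gain of two. You propose to replace $C_1, C_2 \in \mathcal{A}_0$ by ``$C_1$ plus two nested small alternating hexagons'' or ``$C_1$, $C_2$ and one extra alternating hexagon'' --- but each of these is three cycles in place of two, a net gain of one, not two. A single switch along one alternating cycle (even one assembled from $C_2$, a crossing path $P$, and an arc of $C_1$) typically toggles the alternating status of hexagons one at a time; you give no argument for why $+2$ should emerge from such an operation. Separately, the existence of the $M_0$-alternating path $P$ across the annulus is asserted via a vague ``Hall-type / alternating-reachability'' claim that is never justified, and compatibility of $C_1$ and $C_2$ does not by itself force opposite orientations on the two boundary components of the annulus.

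The paper's proof is structurally quite different and makes the source of the $+2$ explicit. It first passes to an extremal pair $(M,\mathcal{A})$ with $|\mathcal{A}|=|\mathcal{A}_0|$, still containing a nested pair, and minimizing $h(\mathcal{A})=\sum_{C\in\mathcal{A}}h(C)$; this minimality forces every minimal member to be a hexagon and imposes tight constraints (Claims~2 and~5) on the minimal non-hexagon member $C$. Inside $H'=I[C]$ one then locates \emph{two} ladder-shaped hexagonal chains $G$ and $G'$ sitting at two separate corners of $H'$, and simultaneously rotates all the $M$-alternating hexagons along each chain to obtain $M'$. Each of $G$ and $G'$ independently contributes at least one extra compatible $M'$-alternating hexagon over what $\mathcal{A}$ had there (Claim~6), and together they give $|\mathcal{A}'|\ge|\mathcal{A}|+2$. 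Thus the $+2$ arises from two separate, carefully chosen corner modifications, not from a single annulus-crossing switch; your outline does not identify a second independent source of gain, and the ``innermost pair'' reduction you propose is strictly weaker than the paper's minimization of $h(\mathcal{A})$, which is what drives all the structural claims.
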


In order to prove this lemma, we need some further terminology.
Let $M$ be a perfect matching of $H$.
An edge of $H$ is called an $M$-\emph{double} \emph{edge} if
it belongs to $M$, and an $M$-\emph{single} \emph{edge} otherwise. $M$-double edges are often indicated by bold or double edges in figures.
An $M$-alternating cycle $C$ of $H$ is said to be \emph{proper} (resp. \emph{improper}) if
each $M$-double edge in $C$ goes from white end to black end (resp. from black end to white end)
along the clockwise direction of $C$.
The boundary of the infinite face of $H$ is called the \emph{boundary} of $H$,
denoted by $\partial(H)$. An edge on the boundary of $H$ is a \emph{boundary edge}.
A hexagon of $H$ is called an \emph{external hexagon} if
it contains a boundary edge, and an \emph{internal hexagon} otherwise.

A hexagonal system $H$ is \emph{cata-condensed} if all vertices of $H$ lie on $\partial(H)$.
A hexagon of a cata-condensed hexagonal system is a \emph{branch} if
it has three adjacent hexagons.
For example, the graph showed in Figure \ref{fig:counterexample-1}(b)
is a cata-condensed hexagonal system with exactly one branch. A cata-condensed hexagonal system without branch is a \emph{ hexagonal chain}. In particular, it is  a \emph{linear chain} if
the centers of all hexagons lie on a straight line. A maximal linear chain of a hexagonal chain is called a {\em segment}.

The symmetric difference of finite sets $A_1$ and $A_2$ is defined as
$A_1\oplus A_2:= (A_1\cup A_2)\backslash (A_1\cap A_2)$.
This operation can be defined among many finite sets in a natural way and
is associative and commutative.
If $C$ is an $M$-alternating cycle of $H$, then  $M\oplus C$ is also a perfect matching of $H$
and $C$ is an $(M\oplus C)$-alternating cycle of $H$, where $C$ may be regarded as its edge-set.

For a cycle $C$ of a hexagonal system $H$, let $I[C]$ denote the subgraph of $H$ formed by
$C$ together with its interior, and let $h(C)$ be the number of hexagons in $I[C]$.
\vskip 0.2cm

\noindent\textbf{Proof of Lemma \ref{Crucial-lemma}.}
By the assumption, we can choose a perfect matching $M$ of $H$ and a non-crossing compatible $M$-alternating set $\mathcal{A}$
 of $H$ so that  the following three conditions hold.\\
(i) $|\mathcal{A}|=|\mathcal{A}_0|$, \\
(ii) $\mathcal{A}$ has a pair of members so that their interiors have a containment relation, \\
(iii) $h(\mathcal{A}):=\sum_{C\in \mathcal{A}}h(C)$ is as small as possible subject to (i) and (ii).

Since $\mathcal{A}$ is non-crossing, we have that for any two cycles in $\mathcal{A}$
their interiors either are disjoint or one contains the other one.
Hence the cycles in $\mathcal{A}$ form a \emph{poset} according to
the containment relation of their interiors.
Since each $M$-alternating cycle has an $M$-alternating hexagon in its interior (cf. [32]),
we immediately have the following claim.

\vskip 0.2cm
\noindent\textbf{Claim 1.} Every minimal member of $\mathcal{A}$ is a hexagon.
\vskip 0.2cm

By the choice of $\mathcal{A}$, we can see that $\mathcal{A}$ has at least one non-hexagon member.
Let $C$ be a minimal non-hexagon member of $\mathcal{A}$.
Then $C$ is an $M$-alternating cycle whose interior contains only minimal members of $\mathcal{A}$.
By Claim 1 $C$ contains at least one hexagon as a  member of $\mathcal{A}$ in its interior.
Set $H':=I[C]$. So it follows that $H'$ is not a linear hexagonal chain.

\vskip 0.2cm
\noindent\textbf{Claim 2.} For any $M$-alternating hexagon $h$  in $H'$,
either  $h\in \mathcal{A}$ or at least one of the three $M$-double edges of $h$ does not belong to $C$.

\begin{proof}
If $h$ belongs to $\mathcal{A}$, then the claim holds.
If not, suppose to the contrary that the three $M$-double edges of $h$ belong to $C$.
Then $M\oplus h$ is a perfect matching of $H$,
and all (one to  three) components of $C\oplus h$ are $(M\oplus h)$-alternating cycles.
We can see that every minimal member of $\mathcal{A}$ in $H'$ is disjoint with $h$.
By the choice of $C$, $C\oplus h$ has a component as a cycle $C'$ which
is not a hexagon and contains a minimal member of $\mathcal{A}$ in its interior.
Since each vertex of $H$ has degree 2 or 3,
each $M$-double edge of $H$ is contained in at most two cycles of $\mathcal{A}$.
This implies that $\mathcal{A}\backslash \{C\}$ has at most one member intersecting $h$.
If such a member exists, denote it by $C''$ and
let $\mathcal{A'}:=(\mathcal{A}\cup \{h, C'\})-\{C, C''\}$;
otherwise, let $\mathcal{A'}:=(\mathcal{A}\cup \{C'\})-\{C\}$.
Then $\mathcal{A'}$ is a compatible $(M\oplus h)$-alternating set of $H$
satisfying Conditions (i) and (ii). But $h(\mathcal{A'})<h(\mathcal{A})$,
contradicting the choice for $\mathcal{A}$. Hence Claim 2 holds.
\end{proof}

 Now we focus our attention on  hexagonal system $H'$ with the boundary $C$ as some preliminaries.
Without loss of generality, suppose that $C$ is a proper $M$-alternating cycle (for the other case,
analogous arguments are implemented on right-top  and right-bottom corners of $H'$).

We apply an approach and notion appeared in Ref. \cite{Zhou-Zhang-2015}.  Along the boundary $C$ of $H'$,
we will find two substructures of $H'$ in its left-top corner and
left-bottom corner as Figures \ref{fig:hexagonal-system-01} and
 \ref{fig:hexagonal-system-03} respectively as follows.

A \emph{b-chain} of hexagonal system $H'$ is a maximal horizontal linear chain consisting of
the consecutive external hexagons when traversing (counter)clockwise the boundary  $\partial (H')$.
A  b-chain  is called  \emph{high} (resp. \emph{low})
if all hexagons adjacent to it are  below (resp. above) it.
For example,   in Figure \ref{fig:substructure} (taken from  \cite{Zhou-Zhang-2015}),
$D_0$, $D_1$, $D_2$, $G_1, G_2,\ldots, G_9, G_1', D_5, D_6$ and  $D_7$ are b-chains.
In particular, $D_0$, $D_1$, $D_2$ and $G_1$ are high b-chains,
while $G_1'$, $D_5$ and $D_6$ are low b-chains.
But $G_2, G_3, \ldots, G_9$ and  $D_7$ are neither high nor low b-chains.

\begin{figure}[H]
\begin{center}
\includegraphics[height=6.50cm]{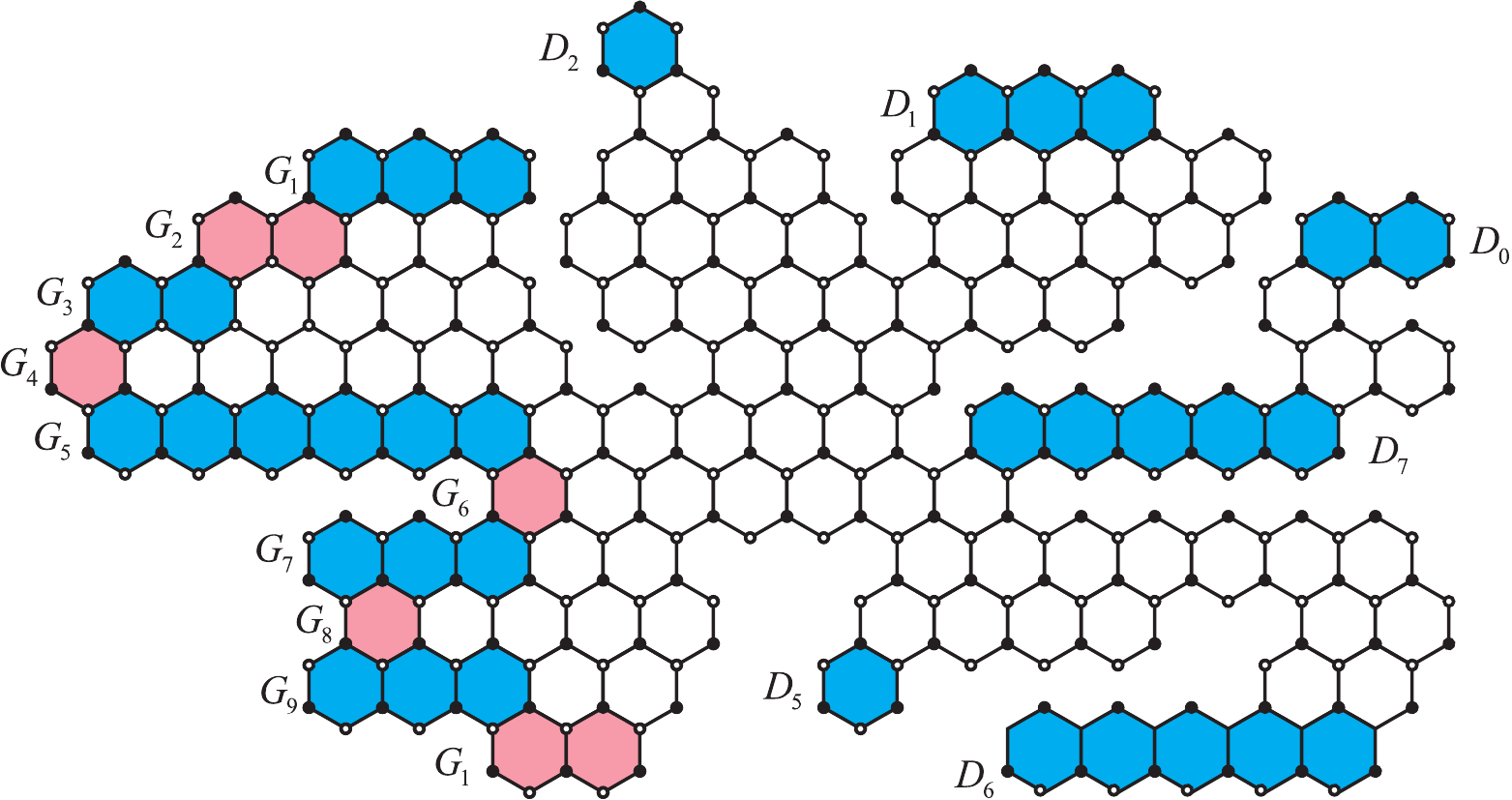}
\caption{{\small
Various b-chains of a hexagonal system, taken from \cite{Zhou-Zhang-2015}.}}
\label{fig:substructure}
\end{center}
\end{figure}

Given a high b-chain and a low b-chain of $H'$, they are distinct,
otherwise $H'$ itself is a linear chain,  contradicting the choice of $C$.
When traversing the b-chains along the boundary $\partial (H')$ counterclockwise from the high b-chain to the low b-chain,
let $G_1$ be the last high b-chain and let $G_1'$ be the first low b-chain after $G_1$.
Then the b-chains between $G_1$ and $G_1'$  descend monotonously.

From high b-chain $G_1$ we have a sequence of consecutive b-chains $G_1, G_2,\ldots, G_m$, $m\geq 1$,
with the following properties:
(1) for each $1\leq i<m$, $G_{i+1}$ is next to  $G_{i}$, and
the left end-hexagon of $G_{i+1}$ lies on the lower left side of $G_{i}$,
(2) either $G_m$  is just the low b-chain $G_1'$ or
$G_{m+1}$ is a b-chain next to $G_m$ such that  $G_{m+1}$
has no hexagon lying on the lower left side of $G_{m}$.
Let $G$ be the subgraph of $H'$ consisting of b-chains $G_1, G_2,\ldots, G_{m-1}$
and the hexagons of $G_m$ lying on the lower left side of $G_{m-1}$.
Then $G$ is a ladder-shape  hexagonal chain.

Similarly, from low b-chain $G_1'$ we have  a sequence of consecutive b-chains
$G_1', G_2',\ldots, G_s'$, $s\geq 1$, with the following properties:
(1) for each $1\leq j<s$, $G_{j+1}'$ is next to  $G_{j}'$, and
the left end hexagon of $G_{j+1}'$ lies on the upper left side of $G_{j}'$,
(2) either $G_s'$  is just the high b-chain $G_1$ or
$G_{s+1}'$ is next to the b-chain $G_{s}'$  such that  $G_{s+1}'$
 has no hexagon lying on the upper left side of $G_{s}'$.
Let $G'$ be the subgraph of $H'$ consisting of b-chains $G_1', G_2',\ldots, G_{s-1}'$
and the hexagons of $G_{s}'$ lying on the higher left side of $G_{s-1}'$. So $G'$ is an inverted  ladder-shaped hexagonal chain.

For example, given a high b-chain $D_1$ and a low b-chain $D_5$ in Figure \ref{fig:substructure},
we can get two required hexagonal chains $G=G_1\cup G_2\cup G_3\cup G_4$ and $G'=G_9\cup G_1'$.
The following claim is obvious.

\vskip 0.2cm
\noindent\textbf{Claim 3.} Either $G$ and $G'$ are disjoint or
they intersect only in the b-chain $G_m=G'_s$.
\vskip 0.2cm

To analyze the substructure $G$  of $H'$, we label some edges of $G$ as follows
(see Figure \ref{fig:hexagonal-system-01}):
let $e_{1,1}$ be the slant $M$-double edge of the right end hexagon of $G_1$ which
belongs to $C$ and contains a peak of  $H'$.
Neither $A$ nor $A'$ is contained in $H'$.
Denote by $e_{i,j}$, $1\leq i\leq m$ and  $1\leq j\leq n(i)$, the $j$-th edge of $G_i$ that  is parallel to $e_{1,1}$  and on the boundary  $C$ of $H'$,
and denote the specific   edges in $G_1$ and $G_m$
by $a, a'$ and $e_0, e_0'$ respectively.

\begin{figure}[H]
\begin{center}
\includegraphics[height=6cm]{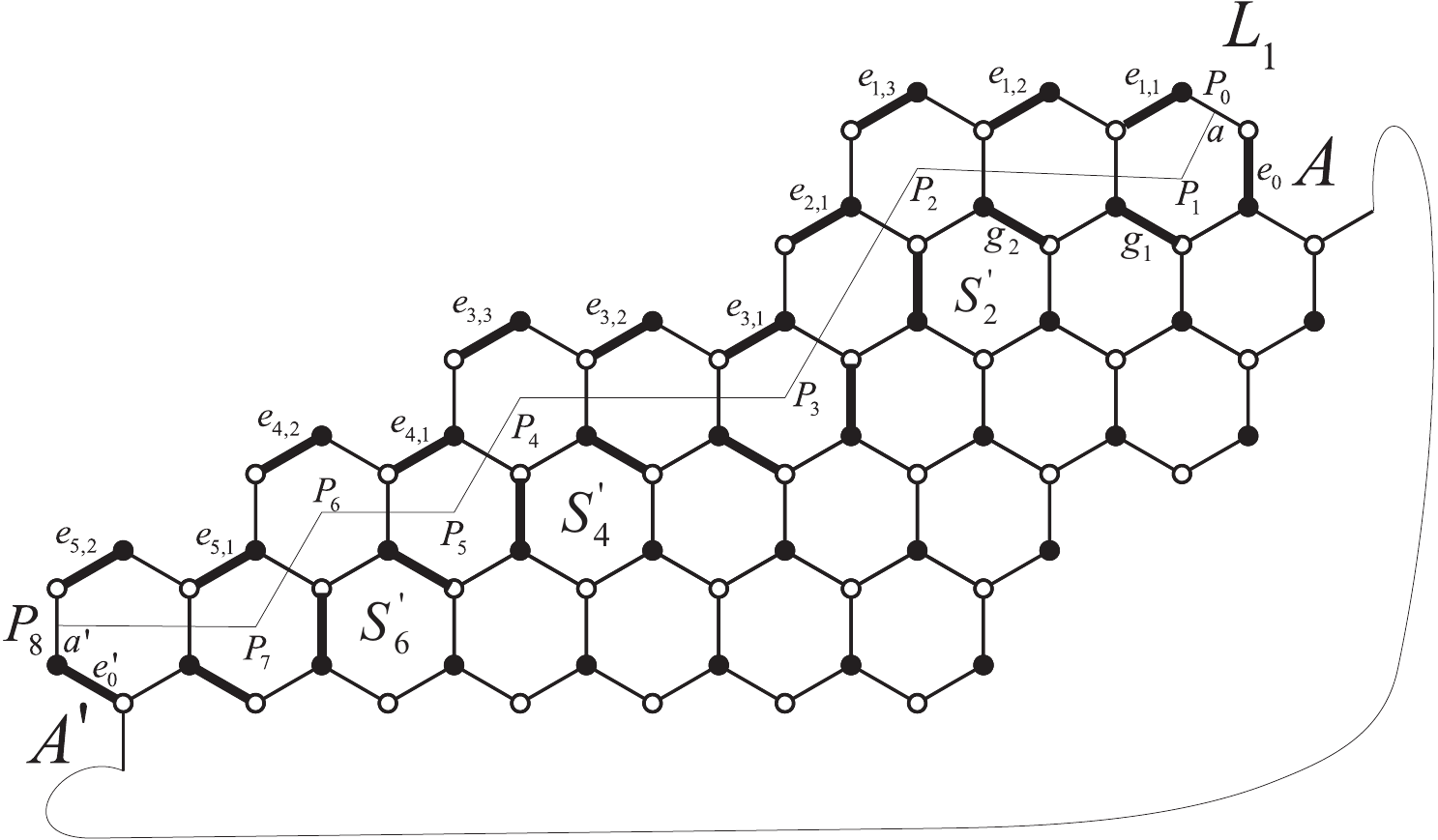}
\caption{{\small
Hexagonal chain $G$ on the left-top corner of $H'$
(bold edges are  $M$-double edges, $m=5$,
n(1)=3, n(2)=1, n(3)=3, n(4)=2, n(5)=2 and $A,A'\notin H'$) and the corresponding broken line segment $L_1$.}}
\label{fig:hexagonal-system-01}
\end{center}
\end{figure}

Since the boundary  $C$ of $H'$ is a proper $M$-alternating cycle,
all the edges $e_0$, $e_0'$, $e_{i,j}$, $1\leq i\leq m$, $1\leq j\leq n(i)$,
are $M$-double edges. In order to simplify our discussions,
we draw a ladder-shape broken line segment $L_1=P_0P_1\cdots P_{q+1}(q\geq 1)$
 (see Figure \ref{fig:hexagonal-system-01}) satisfying:
(1) $L_1$ only passes through hexagons of $G$,
(2) the endpoints $P_0$ and $P_{q+1}$  are the midpoints of the edges $a$ and $a'$ respectively,
(3) $L_1$ passes through the centers of all hexagons of $G$, and
(4) each $P_i$ ($1\leq i\leq q$) is a turning point, which is the center of a hexagon $S_i$ of $G$. Then
each line segment $P_iP_{i+1}$ ($0\leq i\leq q$) is orthogonal to an edge direction, and
 $P_{i+1}$ ($0\leq i\leq q$) lies on the lower left side or the left side of $P_i$
according as $i$ is even or odd.

Similarly we treat substructure $G'$ of $H'$ as follows  (see Figure \ref{fig:hexagonal-system-03}).
Let $f_{k,\ell}$, $1\leq k\leq s$ and $1\leq \ell\leq t(k)$, and $f_0,f_0', b,b'$
be a series of boundary edges on this structure as indicated in Figure
\ref{fig:hexagonal-system-03}.
Neither hexagon $B$ nor hexagon $B'$ is contained in $H'$.
Since the boundary of $H'$ is a proper $M$-alternating cycle,
we can see that all the edges $f_0$, $f_0'$, $f_{k,\ell}$,
$1\leq k\leq s$ and $1\leq \ell\leq t(k)$,
are $M$-double edges.
\begin{figure}[H]
\begin{center}
\includegraphics[height=4.5cm]{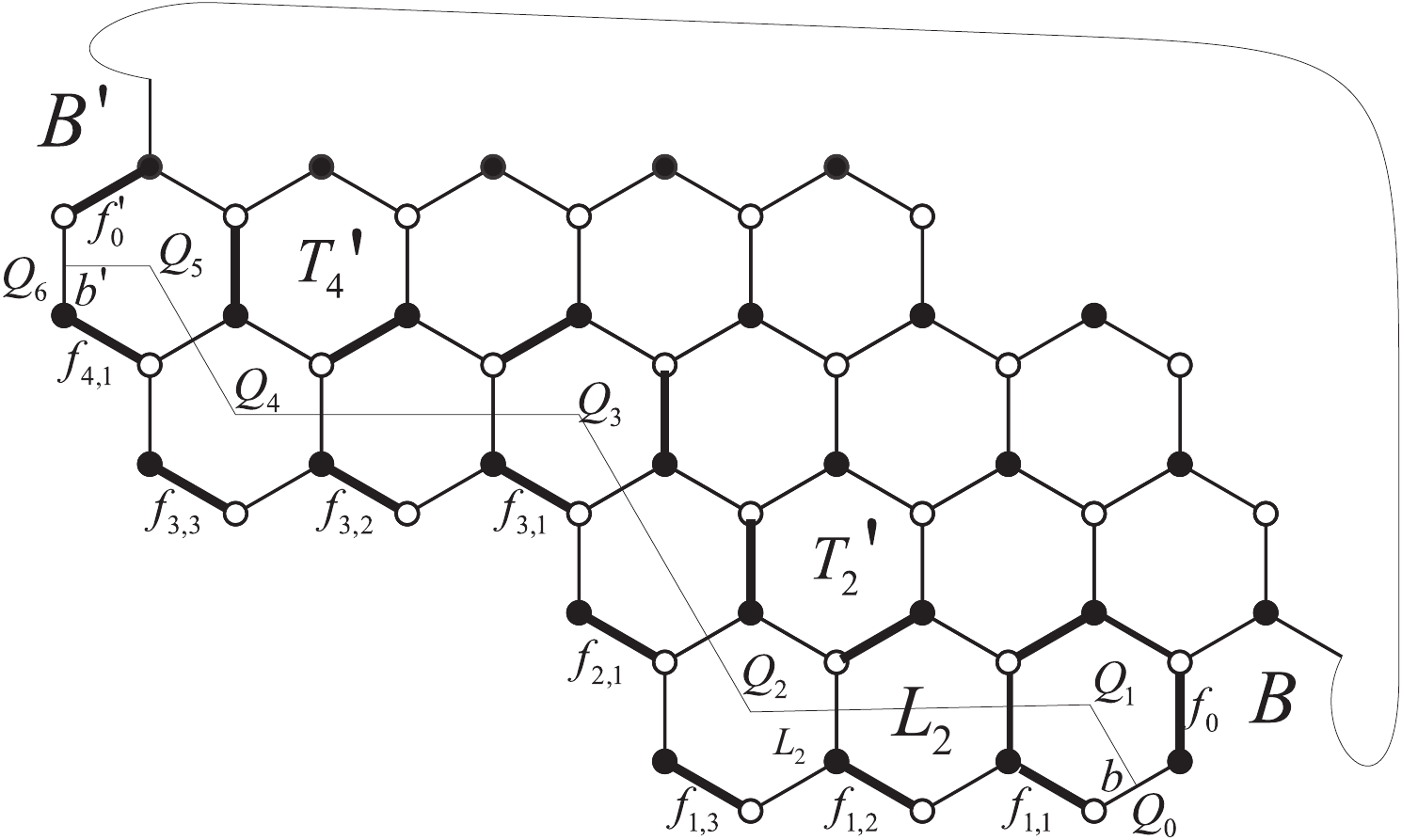}
\caption{{\small
Hexagonal chain $G'$ on the left-bottom corner of $H'$
(bold edges are  $M$-double edges, $s=4$, t(1)=3, t(2)=1, t(3)=3, t(4)=1
and $B,B'\notin H'$) and the corresponding broken line segment $L_2$.}}
\label{fig:hexagonal-system-03}
\end{center}
\end{figure}

Like $L_1$, we also draw a ladder-shape broken line segment
$L_2=Q_0Q_1\cdots Q_{r+1}(r\geq 1)$ as indicated in Figure
\ref{fig:hexagonal-system-03} so that  $L_2$ only passes through hexagons of $G'$ and each turning point $Q_i$ ($1\leq i\leq r$) is the center of a hexagon $T_i$ of $G'$.
It is obvious that both $L_i$, $i=1,2$, have an odd number of turning points.
By  Claim 3, we immediately obtain the following claim.

\vskip 0.2cm
\noindent\textbf{Claim 4.} Either the broken line segments $L_1$ and $L_2$ are disjoint
or the last segment $P_qP_{q+1}$ of $L_1$ is identical to the last segment $Q_rQ_{r+1}$ of $L_2$.
\vskip 0.2cm

Since the boundary of $H'$ is a proper $M$-alternating cycle,
we have that all the edges of $H$ intersected by $L_i$, $i=1,2$, are $M$-single edges.
We now have the following claim.

\vskip 0.2cm
\noindent\textbf{Claim 5.}
(a) The boundary of $G$ (resp. $G'$) is a proper $M$-alternating cycle, and\\
(b)  $n(1)=$ 1 or 2 (resp. $t(1)=$ 1 or 2), and $m\geq 2$ (resp. $s\geq 2$).

\begin{proof}
We only consider  $G$ (the other case is almost the same).
Let $Z_1$ be the path induced by those vertices of $G$ which are just upon $L_1$.
Let $Z_2$ be the path induced by those vertices of $G$ which are just below $L_1$.
Since the boundary $C$ of $H'$ is a proper $M$-alternating cycle,
$Z_1$ is an $M$-alternating path with two end edges in $M$.

To prove statement (a),  it suffices to show that
$Z_2$ is also an $M$-alternating path with two end edges in $M$.
Let $w_1(=e_0'), w_2, \ldots, w_{\ell_2}$ be all parallel edges  of $G$ below $P_qP_{q+1}$
and let $h_1(=e_0), h_2, \ldots, h_{\ell_1}$  be all vertical edges of $G$
on the right of $P_0P_1$   (see Figure \ref{fig:hexagonal-system-02}).
Note that  all the edges intersected by $L_1$ are $M$-single edges. It follows from $\{e_0, e_0'\}\subseteq M$ that
$h_1, h_2, \ldots, h_{\ell_1}$ (resp. $w_1, w_2, \ldots, w_{\ell_2}$)
are forced by $e_0$ (resp. $e_0'$) in turn to belong to $M$.

If $q=1$,  $Z_2$ is an $M$-alternating path with two end edges in $M$.
Let $q\geq 3$.  For each even $i$, $2\leq i\leq q-1$,
let $e_i''$ be the slant edge of $S_i$ in $Z_2$.
Let  $e_i$ and $e_i'$ be the two edges of $Z_2$ adjacent to $e_i''$
(see Figure \ref{fig:hexagonal-system-02}(a)).
Clearly, $e_i$ is parallel to $e_0$,  and $e_i'$ is parallel to  $e_0'$.
We assert that $e_i''\notin M$. Otherwise, $e_i''\in M$, and $e_i''$ does not lie on the boundary $C$ of $H'$
since $C$ is a proper $M$-alternating cycle.
So $H'$ has a hexagon $S_i'$ containing  $e_i, e_i'$ and $e_i''$.
Let $C':=C\oplus S_i$ and let $\mathcal{A}':=(\mathcal{A}\cup \{C'\})-\{C\}$ (see Figure \ref{fig:hexagonal-system-02}(b)).
Then $\mathcal{A}'$ is a compatible $M$-alternating set of $H$ satisfying conditions (i) and (ii).
But $h(\mathcal{A}')=h(\mathcal{A})-1$, contradicting the choice of $\mathcal{A}$.
So the assertion is true.
Note that  all the edges intersected by $L_1$ are $M$-single edges.
We have  $\{e_0, e_0', e_2, e_2', \ldots, e_{q-1}, e_{q-1}'\}\subseteq M$.
So it follows that $Z_2$ is an $M$-alternating path with two end edges in $M$
(see Figure \ref{fig:hexagonal-system-01}). Hence statement (a) holds.

\begin{figure}[H]
\begin{center}
\includegraphics[height=5cm]{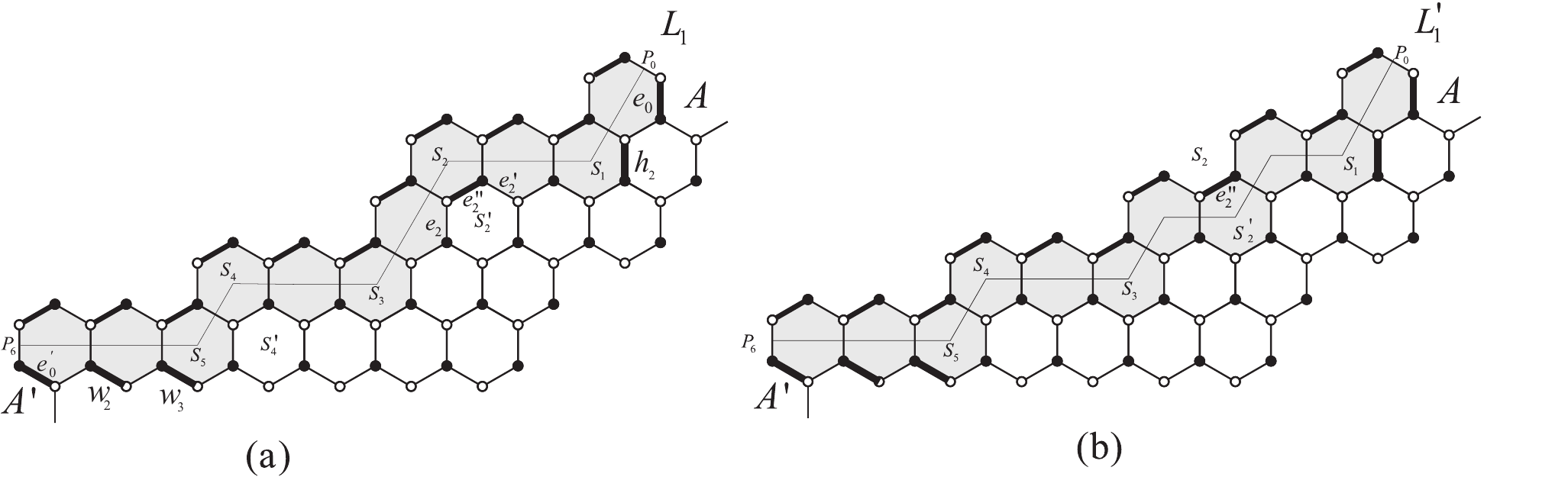}
\caption{{\small
Illustration for Claim 5 in the proof of Lemma \ref{Crucial-lemma}.}}
\label{fig:hexagonal-system-02}
\end{center}
\end{figure}
Next we prove statement (b).
Suppose to the contrary that $n(1)\geq 3$. Let $S_{1,1}$ and $S_{1,2}$ be the first and second hexagons of high b-chain $G_1$ from right to left. Then $P_1$ is the center of  $S_{1,1}=S_1$.
For $i=1,2$, let $g_i$ be the edge of $S_{1,i}$ parallel to $a$ and below $L_1$
(see Figure \ref{fig:hexagonal-system-01}).
By statement (a), we have  $g_1, g_2\in M$.
Therefore,    $S_1$  is a proper $M$-alternating hexagon, but not in $\mathcal{A}$.
By Claim 2, $g_1\notin C$. Since the boundary $C$ of $H'$ is an $M$-alternating cycle,
$g_1$ has no end-vertices in $C$. This implies that
$S_1$ has three consecutively adjacent hexagons in $H'$.
We can see that none of members of $\mathcal{A}$ except $C$ intersect $S_1$.
Let $M':=M\oplus S_1$ and  $\mathcal{A'}:=(\mathcal{A}\cup \{C\oplus S_1\})-\{C\}$.
Then  $M'$ is a perfect matching of $H$, and
$\mathcal{A'}$ is a compatible $M'$-alternating set satisfying conditions (i) and (ii).
But $h(\mathcal{A'})<h(\mathcal{A})$, contradicting the choice for $\mathcal{A}$.
Hence  $n(1)=$ 1 or 2.

Suppose to the contrary that $m=1$.
By statement (a), we can see that $g_1\in M$ and $g_1$ is an edge of $S_1$. We have that $S_1$  is a proper $M$-alternating hexagon, but not in $\mathcal{A}$.
By analogous  arguments as above, we arrive in
a similar contradiction no matter $n(1)=1$ or 2.
Hence $m\geq 2$ and  statement (b) holds.
\end{proof}

Claim 5 implies  that for all odd integers $i$ and $j$, $S_i$ ($1\leq i\leq q$) and  $T_j$ ($1\leq j\leq r$) are
 proper $M$-alternating hexagons, and the other hexagons of $G$ and $G'$ are not $M$-alternating.

For each even $i$, $2\leq i\leq q-1$, let $S_i'$ denote the hexagon (in the hexagonal lattice, but not necessarily contained in $H$)
adjacent to $S_i$ and below $L_1$ (see Figure \ref{fig:hexagonal-system-01}).
Similarly, for each even $j$, $2\leq j\leq r-1$, let $T_j'$ denote the hexagon
adjacent to $T_j$ and above $L_2$ (see Figure \ref{fig:hexagonal-system-03}).

By the above discussions to $H'$,  we now go back to the discussion to $H$ and will get our result. We now get a new perfect matching $M'$ of $H$ from $M$ by rotating all $M$-alternating hexagons of $G$ and $G'$ as  follows (see Figure \ref{fig:hexagonal-system-04}),
$$M':=M\oplus S_1\oplus S_3\oplus \cdots\oplus S_q\oplus T_1\oplus T_3\oplus\cdots\oplus T_r.$$

Let $\mathcal{B}$ be the set of $M'$-alternating hexagons in $G\cup G'$ and let $\mathcal{B}'=\{S_2',S_4',\dots,S_{q-1}'\}\\ \cup\{T_2',T_4',\dots,T_{r-1}'\}$. Then $\mathcal B\supseteq \{S_1, S_3, \dots, S_q, T_1, T_3,\dots,T_r\}$. We can have the following system of  cycles of $H$,
$$\mathcal{A}':=( \mathcal{A}\cup \mathcal{B})\setminus (\mathcal{B}'\cup \{C\}).$$

\begin{figure}[H]
\begin{center}
\includegraphics[height=6.5cm]{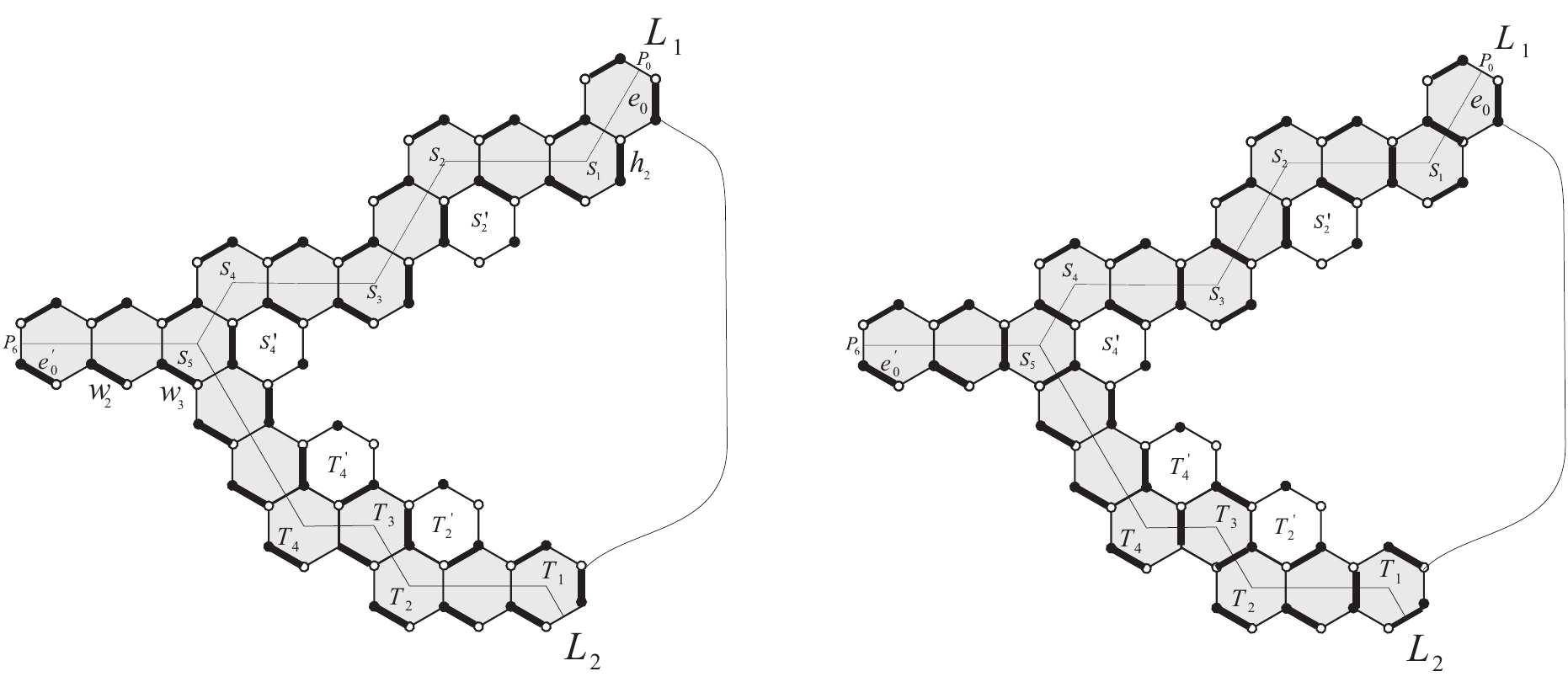}
\caption{{\small
Illustration for Claim 6: The gray hexagons form $G\cup G'$, and perfect matchings $M$ and $M'$ of $H$ have restrictions on $G\cup G'$ as left and right respectively.}}
\label{fig:hexagonal-system-04}
\end{center}
\end{figure}

\vskip 0.2cm
\noindent\textbf{Claim 6.} $\mathcal A'$ is an $M'$-alternating compatible set and $|\mathcal A'|\geq |\mathcal A|+2.$
\begin{proof}Given any member $C'$ in $\mathcal A'$. Then $C'\in \mathcal A$ or $\cal B$. First we want to show that $C'$ is an $M'$-alternating cycle.  If $C'$ does not intersect anyone of $S_1, S_3, \dots, S_q, T_1, T_3,\dots,T_r$, then $C'\in \mathcal A$ and $C'$ is both $M$- and $M'$-alternating cycle. If $C'=S_i$ or $T_j$ for odd $1\leq i\leq q$ and odd $1\leq j\leq r$, then $C'\in \mathcal B$ and $C'$ is both $M$- and $M'$-alternating cycle. The remaining case is that $C'$ intersects some $S_i$ or $T_j$ for odd $1\leq i\leq q$ and odd $1\leq j\leq r$, say the former $S_i$, but $C'\not=S_i$. We assert that $C'\in \mathcal B$, which implies that $C'$ is an $M'$-alternating hexagon. Suppose to the contrary that $C'\in \mathcal A\setminus \mathcal B$.  Then $C'$ is an $M$-alternating cycle. If $I[C']\subset H'=I[C]$, then $C'$ is an $M$-alternating hexagon not in $G$ since each member of $\mathcal A$ lying in the interior of $C$ is a hexagon. So $C'=S'_{i-1}$ for $i\geq 3$ or $C'=S'_{i+1}$ for $i\leq q-2$, a contradiction. Otherwise, $C'$ lies outside $C$ since  $C$ and $C'$ are non-crossing. Since  $C$ and $C'$ are compatible $M$-alternating cycles, $C'$ passes through only either the right vertical edge $e_0$ of $S_1$ for $n(1)\geq 2$ or $e_0'$ of $S_q$ for $n(m)=1$ and $G$ and $G'$ being disjoint. In such either case, three $M$-double edges of $S_1$ or $S_q$ belong to $C$, contradicting Claim 2, so the assertion holds.

Next we show that $\mathcal A'$ is an $M'$-alternating compatible set. For the members of $\mathcal A'$ lying in the interior of $C$, they are $M'$-alternating hexagons and thus compatible. For the members $C'$ of $\mathcal A'$ lying in the exterior of $C$, $C'$ is disjoint with everyone of $S_1, S_3, \dots, S_q, T_1, T_3,\dots,T_r$. Otherwise, $C'\in \mathcal B$ by the above assertion, contradicting that $C'$ lies on the exterior of $C$. So such members $C'$ are $M$-alternating cycles in $\mathcal A$ and compatible. Suppose that $C'$ intersects some member $h$ of $\mathcal A'$ inside $C$.
By the Jordan Curve Theorem we know that  $C'\cap h \subset C$. That is, each edge of $C'\cap h$ belong to $C$. Since $M$ and $M'$ have the same restriction on $C'\cap h$ and $C'$ and $C$ are compatible $M$-alternating cycles, each edge of $C'\cap h$ belong to  $M$, thus to $M'$, so $C'$ and $h$ are compatible $M'$-alternating cycles.

Finally we show the remaining inequality. For each odd $i$ with $1\leq i\leq q-2$, the hexagons between $S_i$ and $S_{i+2}$ in $G$ are not $M$-alternating, but at least one and at most two of them are  $M'$-alternating hexagons, which correspond to $S'_{i+1}$. Similarly for each odd $j$ with $1\leq i\leq r-2$, the hexagons between $T_i$ and $T_{i+2}$ in $G'$ are not $M$-alternating, but at least one and at most two of them are  $M'$-alternating hexagons, which correspond to $T'_{i+1}$. Next we consider the end segments of hexagonal chains $G$ and $G'$. If $n(1)\geq 2$, then $S_1$ is the right end-hexagon of $G_1$, and $S_1\notin \mathcal A$ since $S_1$ and $C$ are not compatible $M$-alternating cycles. But, $S_1\in \mathcal B$. Otherwise, $G_1$ is a single hexagon other than $S_1$, the upper end segment of $G$ has a unique $M$-hexagon and two $M'$-alternating hexagons, $S_1$ and its neighbor. Similarly we have that the last row $G_m$ of $G$ has more members of $\mathcal B$ than $\mathcal A$ by at least one. In analogous arguments as above we also have that the first segment and last row of $G'$ each has more members of $\mathcal A$ than $\mathcal B$ by at least one. Note that if both last rows of $G$ and $G'$ are identical, then their extra  members together count one, and $C$ is moved out $\mathcal A$. So we have that
$|\mathcal A'|\geq |\mathcal A|+2.$ \end{proof}

By Theorem 1.3 and Claim 6, we have
$Af(H)\geq af(H,M')=c'(H,M')\geq |\mathcal{A}'|\geq |\mathcal{A}|+2=|\mathcal{A}_0|+2$,
that is, $Af(H)\geq |\mathcal{A}_0|+2$.
Now the entire proof of the lemma is complete.
\hfill $\square$

%
%
%
%
%
%


\section{Minimax results for large anti-forcing numbers}

We can describe a minimax result stronger than Theorem \ref{main-thm} as follows.

\begin{thm}\label{containment-relation}
For a hexagonal system $H$, let $M$ be a perfect matching of $H$ with $af(H,M)=Af(H)$ or $Af(H)-1$,
and let $\mathcal{A}$ be a maximum non-crossing compatible $M$-alternating set of $H$.
Then (1) any two members in $\mathcal{A}$ have disjoint interiors, and
(2) for any  $C\in \mathcal{A}$, $I[C]$ is a linear chain.
\end{thm}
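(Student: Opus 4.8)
The plan is to derive Theorem~\ref{containment-relation} almost directly from the crucial Lemma~\ref{Crucial-lemma}, together with the structural analysis of linear chains that appears in its proof. Let me outline the two parts.

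\textbf{Part (1): disjoint interiors.} Suppose for contradiction that $\mathcal{A}$ is a maximum non-crossing compatible $M$-alternating set in which two members have interiors with a containment relation. Apply Lemma~\ref{Crucial-lemma} with $M_0 := M$ and $\mathcal{A}_0 := \mathcal{A}$: we obtain $Af(H) \geq |\mathcal{A}| + 2$. On the other hand, by Lemma~\ref{Crossing-Noncrossing} and Theorem~\ref{Anti-forcing-feedback}, $|\mathcal{A}| = c'(H,M) = af(H,M)$, since a maximum compatible $M$-alternating set can be taken non-crossing. Hence $Af(H) \geq af(H,M) + 2$, contradicting the hypothesis that $af(H,M)$ is $Af(H)$ or $Af(H)-1$. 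This settles (1).

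\textbf{Part (2): each $I[C]$ is a linear chain.} Suppose for contradiction that some $C \in \mathcal{A}$ has $I[C]$ not a linear chain. The idea is to run essentially the same construction as in the proof of Lemma~\ref{Crucial-lemma}, but now applied to a \emph{minimal} such bad member. Choose $M$ and $\mathcal{A}$ (among maximum non-crossing compatible $M$-alternating sets, over all perfect matchings realizing the extremal anti-forcing condition if needed) so that $h(\mathcal{A})$ is minimized subject to containing a member whose interior is not a linear chain; then take $C$ to be a minimal such member in the containment poset. By Part~(1) all members of $\mathcal{A}$ inside $C$ have pairwise disjoint interiors, and by minimality of $C$ each of them must in fact bound a linear chain; moreover, since each $M$-alternating cycle contains an $M$-alternating hexagon in its interior, the analogue of Claim~1 forces every minimal member inside $C$ to be a hexagon. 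Now repeat verbatim the left-top/left-bottom corner analysis of $H' := I[C]$: extract the ladder-shaped chains $G$, $G'$, build the broken line segments $L_1$, $L_2$, establish Claims~2--5 (their proofs only used that $H'$ is not a linear chain and the extremality of $h(\mathcal{A})$), and then perform the rotation $M' := M \oplus S_1 \oplus S_3 \oplus \cdots \oplus S_q \oplus T_1 \oplus T_3 \oplus \cdots \oplus T_r$ together with the set surgery producing $\mathcal{A}'$ as in Claim~6. Claim~6 gives $|\mathcal{A}'| \geq |\mathcal{A}| + 2$ with $\mathcal{A}'$ a compatible $M'$-alternating set, so after applying Lemma~\ref{Crossing-Noncrossing} to make it non-crossing and Theorem~\ref{Anti-forcing-feedback} we again reach $Af(H) \geq af(H,M') \geq |\mathcal{A}| + 2 = af(H,M) + 2$, contradicting the extremal hypothesis. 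Hence every $I[C]$ is a linear chain.

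\textbf{Main obstacle.} The delicate point is not the application of Lemma~\ref{Crucial-lemma} in Part~(1), which is immediate, but rather justifying in Part~(2) that the entire machinery from the proof of Lemma~\ref{Crucial-lemma} transfers when $C$ is merely a minimal \emph{non-linear-chain} member rather than a minimal \emph{non-hexagon} member. One must check that Claims~2--6 only ever invoked (a) that $C$ is a proper (or, symmetrically, improper) $M$-alternating cycle bounding a hexagonal system that is not a linear chain, and (b) the $h(\mathcal{A})$-minimality used to exclude local rotations that would shrink $h(\mathcal{A})$ while preserving the offending configuration — both of which still hold here. In particular one should verify that the members of $\mathcal{A}$ strictly inside $C$ being linear chains (not necessarily hexagons) does not break the counting in Claim~6; this is fine because that counting concerns only the hexagons of $G \cup G'$ and the single member $C$ that is removed, and the interior members are untouched by the rotation since $L_1, L_2$ meet only edges on or near $C$. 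Once this transfer is confirmed, the contradiction closes exactly as before.
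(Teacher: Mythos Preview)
Your Part~(1) is correct and identical to the paper's argument.

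Your Part~(2), however, has a genuine gap. First, a misunderstanding: once Part~(1) is established, \emph{no} member of $\mathcal{A}$ lies strictly inside $C$, since that would create a containment pair. So your sentence ``all members of $\mathcal{A}$ inside $C$ have pairwise disjoint interiors, and by minimality of $C$ each of them must in fact bound a linear chain'' is vacuous --- there simply are no such members. This already breaks the transfer of Claim~6 (whose assertion ``each member of $\mathcal{A}$ lying in the interior of $C$ is a hexagon'' is used to control cycles intersecting the rotated $S_i$). More seriously, the $h(\mathcal{A})$-minimality arguments in Claims~2 and~5 do \emph{not} only use that $H'$ is not a linear chain: they explicitly construct a smaller $\mathcal{A}'$ and verify that it still satisfies Condition~(ii), namely that it contains a containment pair. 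This holds in Lemma~\ref{Crucial-lemma} because the hexagon members inside $C$ survive into the shrunken $C'$. Your proposed replacement invariant (``$\mathcal{A}$ has a non-linear-chain member'') is not preserved under these reductions: removing one boundary hexagon from $I[C]$ (as in Claim~5(a) with $C':=C\oplus S_i$, or Claim~5(b) with $C\oplus S_1$) can easily turn a non-linear $I[C]$ into a linear chain, so the minimality contradiction evaporates. You also need the modified $\mathcal{A}'$ to remain \emph{maximum} for the new matching $M'$, which is not automatic.

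The paper's route for Part~(2) is quite different and avoids re-running any of the Lemma~\ref{Crucial-lemma} machinery. It exploits Part~(1) as follows: take a minimum anti-forcing set $S$ with $|S|=|\mathcal{A}|$; compatibility forces $S$ to meet each $C_i$ in exactly one edge $e_i$, and since the $I[C_i]$ have pairwise disjoint interiors, $e_i$ is the \emph{only} edge of $S$ inside $I[C_i]$. Hence $\{e_i\}$ is an anti-forcing set for the restriction $M_i$ on $I[C_i]$, so $af(I[C_i])=1$, and by Li's characterization (Theorem~\ref{anti-forcing-edge}) each $I[C_i]$ is a truncated parallelogram. If some $I[C_i]$ is not a linear chain, its unique $M_i$-alternating hexagon $h_1$ is the corner; rotating $h_1$ produces three $(M\oplus h_1)$-alternating hexagons $h_1,h_2,h_3$, and $(\mathcal{A}\setminus\{C_i\})\cup\{h_1,h_2,h_3\}$ is a compatible set of size $|\mathcal{A}|+2$, giving the same $Af(H)\ge af(H,M)+2$ contradiction with a single local move.
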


From Statement (1) of  Theorem \ref{containment-relation}, which is implied by Lemma \ref{Crucial-lemma},  we immediately obtain our main result. \\

\noindent\textbf{{Proof of Theorem} \ref{main-thm}.}
Let $\mathcal{A}$ be a maximum non-crossing compatible $M$-alternating set of $H$.
Then by Theorem \ref{Anti-forcing-feedback} and Lemma \ref{Crossing-Noncrossing},
we have $af(H,M)=|\mathcal{A}|$.
By Theorem \ref{containment-relation}(1), we know that for any two cycles in $\mathcal{A}$
their interiors are disjoint.
It was shown in \cite{Zhang-Zhang-2000} that for each $C\in \mathcal{A}$,
$H$ has an $M$-alternating hexagon $h$ in $I[C]$.
All such cycles $C$ in $\mathcal{A}$ are replaced with $M$-alternating hexagons $h$ of $I[C]$
to get a  set $K$ of $M$-alternating hexagons  with $|K|=|\mathcal{A}|$.
So we have $fr(H,M)\geq |K|=af(H,M)$.
On the other hand, $af(H,M)\geq fr(H,M)$ since $K$ is also a compatible $M$-alternating set. Both inequalities imply the result.
\hfill $\square$

\vskip 0.2cm

In order to prove Theorem \ref{containment-relation}(2),  the characterization for hexagonal systems $H$ with $af(H) = 1$ due to Li \cite{Li-1997} are presented here.
It is clear that $H$ has an anti-forcing edge if and only if $af(H) = 1$.

\begin{figure}[H]
\begin{center}
\includegraphics[height=3cm]{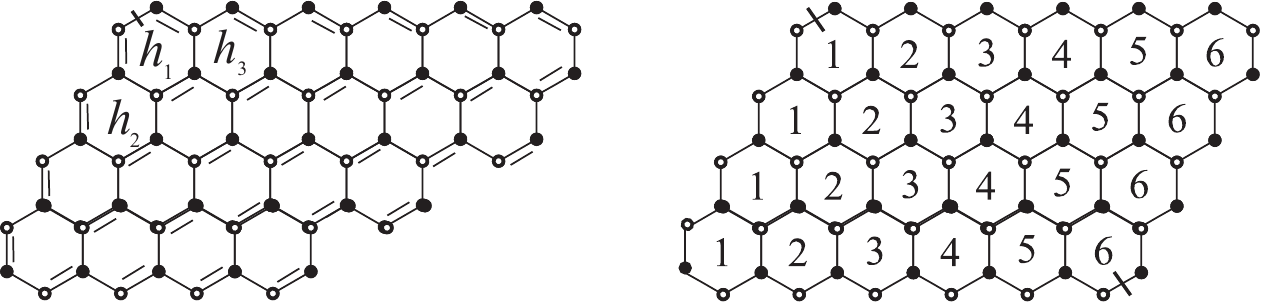}
\caption{{\small
Truncated parallelograms $H(6,6,5,4)$ and $H(6,6,6,6)$: anti-forcing edges are marked.}}
\label{fig:hexagonal-system-10}
\end{center}
\end{figure}

For integers $n_1 \geq n_2 \geq \ldots \geq n_k$, $k\geq 1$, let $H(n_1, n_2, \ldots, n_k)$
be a hexagonal system with $k$ horizontal rows of $n_1 \geq n_2 \geq \ldots \geq n_k$ hexagons and first
hexagon of each row being immediately below and to the right of the first one in the previous row, and
we call it \emph{truncated parallelogram} \cite{Cyvin-book-1988};
For example, see Figure \ref{fig:hexagonal-system-10}.
In particular, $H(r, r, \ldots, r)$ with $k \geq 2$ is parallelogram,
both $H(1, 1, \ldots, 1)$ with $k \geq 1$ and $H(r)$ with $r \geq 1$ are  linear chains.

\begin{thm}\label{anti-forcing-edge}
\cite{Li-1997} Let $H$ be a hexagonal system.  Then
 $af(H)=1$ if and only if $H$ is a truncated parallelogram.
\end{thm}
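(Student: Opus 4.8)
\textbf{Proof proposal for Theorem \ref{anti-forcing-edge}.}

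The plan is to prove the two directions of the equivalence separately, with the harder work lying in the ``only if'' direction. For the ``if'' direction, I would argue constructively: given a truncated parallelogram $H=H(n_1,n_2,\ldots,n_k)$, I would exhibit a single edge whose deletion leaves a subgraph with a unique perfect matching. The natural candidate is the slant edge at one of the two ``staircase'' corners — for instance, the upper-left slant edge of the first hexagon in the top row (as marked in Figure \ref{fig:hexagonal-system-10}). After deleting such an edge, a vertex of degree one appears; the edge meeting it is forced into every perfect matching, which in turn forces its neighbors, and one checks that this forcing cascade propagates row by row down the staircase and fills the whole truncated parallelogram, leaving exactly one perfect matching. (Equivalently, one may observe that a truncated parallelogram has a unique perfect matching after removing the forced edge, since each truncated parallelogram deformation-retracts, matching-wise, onto a smaller one.) This shows $af(H)=1$.

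For the ``only if'' direction, suppose $af(H)=1$, so $H$ has an anti-forcing edge $e$; equivalently, $e$ lies in all but exactly one perfect matching of $H$. The strategy is to translate this into a strong structural restriction. By Lemma \ref{Anti-forcing-cycle}, $\{e\}$ is an anti-forcing set of the perfect matching $M$ that avoids $e$, so $e$ meets every $M$-alternating cycle of $H$; combined with the fact (used repeatedly above, from \cite{Zhang-Zhang-2000}) that every $M$-alternating cycle contains an $M$-alternating hexagon in its interior, this means $H$ has very few $M$-alternating cycles — in fact all of them pass through the single edge $e$. I would then use the plane-bipartite minimax identity (Theorem \ref{Anti-forcing-feedback}) to get $c'(H,M)=af(H,M)=1$, so $H$ has no two compatible $M$-alternating cycles; in particular $H$ has at most one $M$-alternating hexagon, and combining with Theorem \ref{HS-F(H)-Fries(H)} one gets strong bounds like $Fr(H)\le$ a small constant. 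The remaining step is to show that a hexagonal system so constrained must be a truncated parallelogram: I would induct on the number of hexagons, peeling off a b-chain (maximal horizontal linear chain of external hexagons) along the boundary and arguing that any deviation from the truncated-parallelogram shape — an extra hexagon creating a ``branch,'' a kink in the staircase, or an internal hexagon — produces a second $M$-alternating hexagon compatible with the first, or an $M$-alternating cycle avoiding $e$, contradicting $af(H,M)=1$.

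The main obstacle is precisely this last classification step: showing that ``$H$ has an anti-forcing edge'' forces the global truncated-parallelogram shape. The difficulty is that the hypothesis is about a single perfect matching $M$ (the one avoiding $e$) and a single edge, whereas the conclusion is a rigid combinatorial shape, so one must carefully propagate the local constraint (no two compatible $M$-alternating cycles, all $M$-alternating cycles through $e$) outward along the entire boundary. I expect the cleanest route is to fix $M$ and the forced chain of $M$-double edges emanating from deleting $e$, show this chain is a monotone staircase hitting every row, and then show any hexagon not accounted for by this staircase either carries its own $M$-alternating hexagon disjoint from the others or lets one build an $M$-alternating cycle missing $e$. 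Throughout I would lean on the b-chain/ladder machinery and the proper/improper $M$-alternating cycle bookkeeping already developed in Section 2, since exactly that apparatus is designed to detect such extra alternating structures near corners and branches.
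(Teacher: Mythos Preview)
The paper does not prove Theorem \ref{anti-forcing-edge} at all: it is quoted as a known result from Li \cite{Li-1997} and used as a black box in the proof of Theorem \ref{containment-relation}(2). So there is no ``paper's own proof'' to compare against; you are attempting to reconstruct an argument that the authors simply import.

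On the substance of your sketch: the ``if'' direction is fine and is exactly the standard cascading argument. For the ``only if'' direction, your opening moves are sound --- in particular, the observation that $c'(H,M)=1$ forces $H$ to have a unique $M$-alternating hexagon is correct (any two $M$-alternating hexagons in a hexagonal system are compatible, since adjacent $M$-alternating hexagons must share an $M$-double edge), and this is indeed the key local constraint. However, the appeal to Theorem \ref{HS-F(H)-Fries(H)} to bound $Fr(H)$ does not do what you want: $af(H,M)=1$ for one particular $M$ says nothing directly about $Af(H)$ or $Fr(H)$, and in fact truncated parallelograms can have arbitrarily large Fries number. The real content is precisely the classification step you flag as the main obstacle, and your proposal there (``peel off a b-chain and show any deviation creates a second compatible $M$-alternating cycle'') is a plausible outline but not a proof --- you have not supplied the case analysis, and Li's original argument is a nontrivial structural characterization. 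If you want to actually carry this out, the route via Lemma \ref{Chapter-fact} (the three-ray description of perfect matchings with a unique alternating hexagon) is likely cleaner than raw b-chain induction.
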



\vskip 0.2cm

\noindent\textbf{{Proof of Theorem} \ref{containment-relation}.}
(1) By Theorem \ref{Anti-forcing-feedback} and Lemma \ref{Crossing-Noncrossing}, we have $af(H,M)=|\mathcal{A}|$.
Suppose to the contrary that statement (1) does not hold. Then by Lemma \ref{Crucial-lemma}
we have $Af(H)\geq |\mathcal{A}|+2=af(H,M)+2\geq Af(H)+1$, a contradiction.
So statement (1) holds.

(2) Let $n:=af(H,M)=|\mathcal{A}|$ and let $\mathcal{A}=:\{C_1,C_2,\ldots,C_n\}$.
Choose an  anti-forcing set $S$ of $M$ with $|S|=n$.  Let $S_i:=S\cap E(C_i)$, $i=1,2,\ldots,n$.
By Lemma \ref{Anti-forcing-cycle}
we have $S_i\neq \emptyset$ for each $i$. Since $\mathcal{A}$ is a  compatible $M$-alternating set,
$S_i\cap S_j=\emptyset$ for any $1\leq i< j\leq n$. So we can assume that  $S:=\{e_1,e_2,\ldots, e_n\}$ with $e_i\in E(C_i)$ for all $1\leq i\leq n$.
For any $1\leq i \leq n$, since $C_i$ is an  $M$-alternating cycle of $H$,
the restriction $M_i$ of $M$ on $I[C_i]$ is a perfect matching of $I[C_i]$.
By Theorem \ref{containment-relation}(1), we can see that only  edge $e_i$ of $S$ lies in $I[C_i]$,
$1\leq i \leq n$.
So all $M_i$-alternating cycles in $I[C_i]$ pass through edge $e_i$, $1\leq i \leq n$.
By Lemma \ref{Anti-forcing-cycle}, we have that
$\{e_i\}$ is an anti-forcing set of $M_i$, $1\leq i \leq n$.
That is, $e_i$ is an anti-forcing  edge of $I[C_i]$. By Theorem \ref{anti-forcing-edge}, each $I[C_i]$ is a truncated parallelogram.

If some $I[C_i]$
 is not a linear chain, i.e. $I[C_i]$ has at least two rows and at least two columns of hexagons,
then $I[C_i]$ has a unique perfect matching $M_i$  not containing edge $e_i$
(see  Figure \ref{fig:hexagonal-system-10} (left)). Let $h_1$ be the hexagon of $I[C_i]$ with edge $e_i$, and let $h_2$ and $h_3$ be the two hexagons of $I[C]$ adjacent to $h_1$ in the first column and the first row respectively.
It was pointed out in \cite{Li-1997} that
$h_1$ is only $M_1$-alternating hexagon in $I[C_1]$.
So $M':=M\oplus h_1$ is a perfect matching of $H$, and $h_1,h_2$ and $h_3$ are $M'$-alternating hexagons.  We can see that  $\mathcal{A}':=\mathcal{A}\cup \{h_1, h_2, h_3\}-\{C\}$ is a compatible $M'$-alternating set of $H$.
Hence
$Af(H)\geq af(H,M')\geq |\mathcal{A}'|= |\mathcal{A}|+2\geq Af(H)+1$,
a contradiction. Hence each $I[C_i]$ is a linear chain and  statement (2) holds.
 \hfill$\Box$

\section{Minimax result for all perfect matchings}

It is natural to ask whether Theorem \ref{main-thm} holds for all perfect matchings $M$ of a hexagonal system $H$.  A counterexample can show  that the minimax relation does not necessarily hold for a perfect matching of a hexagonal system $H$
with the third maximum anti-forcing number $Af(H)-2$.

\begin{figure}[H]
\begin{center}
\includegraphics[height=2.5cm]{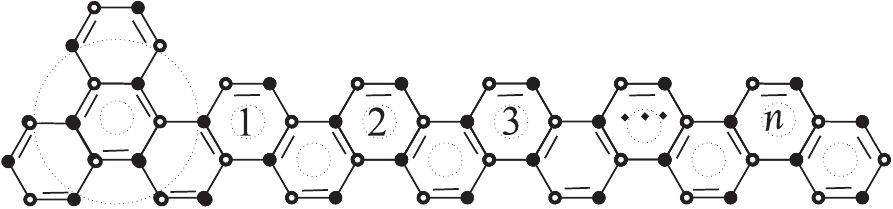}
\caption{{\small
A hexagonal system $R_n$ with a perfect matching $M$.}}
\label{fig:hexagonal-system-11}
\end{center}
\end{figure}

Let $R_n$ be a hexagonal system with $2n+4$ hexagons  and a perfect matching $M$ as shown in Figure
\ref{fig:hexagonal-system-11} (the edges in $M$ are indicated by double edges). Then $R_n$ contains one triphenylene whose central hexagon is denoted $h$. Let $M'=M\oplus h$. Then all hexagons of $R_n$ are $M'$-alternating. So
the Fries number of $R_n$ is the number of hexagons in $R_n$  (see also \cite{Harary-1991}).
By Theorem \ref{HS-F(H)-Fries(H)}, we have $Af(R_n)=Af(R_n,M')=Fr(R_n)=2n+4$.

However we can confirm that $af(R_n, M)=Af(H)-2>fr(H,M)$. By counting $M$-alternating hexagons in $R_n$, we have that  $fr(H,M)=2n+1$. On the other hand, we can find a compatible $M$-alternating set of size $2n+2$. So $af(R_n, M)\geq 2n+2> fr(H,M)$.
By a direct check or Theorem \ref{main-thm}, we have $af(R_n, M)\leq Af(R_n)-2$.
So $af(R_n, M)=Af(H)-2$.


%
%

We can see that the above counterexample contains a triphenylene as nice subgraph. In fact we can give a characterization for a hexagonal system $H$ to have the mini-max relation $af(H,M)=fr(H,M)$ by forbidding triphenylene as nice subgraph (see Theorem \ref{Triphenylene}).

To the end we present some concepts and  known results. Let $H$ be a hexagonal system with a perfect matching.  Let $r(H)$ and $k(H)$ be the numbers of sextet patterns and Kekul\'e structures of $H$ respectively.

\begin{thm}[\hskip -0.02mm\cite{Zhang-Chen-1986, Shiu-2002}]\label{Coronoid-nice-subgraph}
Let $H$ be a hexagonal system with a perfect matching. Then $r(H)\leq k(H)$, and the following statements are equivalent.\\
(i) $r(H)= k(H)$,\\
(ii) $H$ has a coronene as a nice subgraph, and\\
(iii)  $H$ has two disjoint cycles $R$ and $C$ so that  $R$ lies in the interior of $C$ and  $R\cup C$ is a nice subgraph of  $G$.
\end{thm}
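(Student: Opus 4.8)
The plan is to split the statement into the inequality $r(H)\le k(H)$ and the three-way equivalence, and to route both through a single explicit injection from sextet patterns to Kekul\'e structures, reading off its failure to be surjective as the geometric conditions (ii) and (iii). Concretely, after proving $r(H)\le k(H)$ I will characterise when the inequality is strict, and show that this strictness is exactly what conditions (ii) and (iii) detect; since (ii) and (iii) will be shown equivalent, this yields the asserted equivalence of the three conditions.

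For the inequality I would build an injection $\Phi$ from the sextet patterns of $H$ into its Kekul\'e structures. Fix the black/white colouring of $V(H)$ with peaks black, so that each hexagon has a well-defined \emph{proper} perfect matching $D_h$ (double edges running from white to black clockwise). Recall that the Kekul\'e structures of a hexagonal system form a distributive lattice under the rotation relation $M\mapsto M\oplus h$ along alternating hexagons, so any hexagonal system with a perfect matching possesses a unique minimum Kekul\'e structure, characterised by having no proper alternating hexagon. Given a sextet pattern $S=\{h_1,\dots,h_t\}$, the complement $H-V(S)$ is again a hexagonal system with a perfect matching; letting $M^{-}$ be its minimum Kekul\'e structure I set
$$\Phi(S):=M^{-}\cup\bigcup_{i=1}^{t}D_{h_i},$$
a Kekul\'e structure of $H$ in which every $h_i$ is a proper alternating hexagon. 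The delicate point is \emph{injectivity}: from $M=\Phi(S)$ one must recover $S$ canonically. The idea is that $S$ is exactly the family of proper $M$-alternating hexagons produced by the greedy sequence of rotations that drives $M$ down to the lattice minimum on the complement; the minimality baked into $M^{-}$ forbids any proper alternating hexagon off $S$, while compatibility of the $h_i$ makes these rotations independent, so $S$ is reconstructed without ambiguity. This gives $r(H)\le k(H)$.

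With $\Phi$ available, the comparison of $r(H)$ and $k(H)$ is governed by surjectivity: $\Phi$ is onto exactly when equality holds. I would show that a Kekul\'e structure lies \emph{outside} the image of $\Phi$ precisely when it carries a pair of concentric alternating cycles, namely an $M$-alternating cycle $R$ in the interior of an $M$-alternating cycle $C$ with $R\cup C$ nice; such nesting is exactly the cyclic obstruction that prevents the greedy rotation from terminating at a min-on-complement form, and it is what the gap $k(H)-r(H)$ counts. This identifies the strictness $r(H)<k(H)$ with condition (iii). For (ii)$\Leftrightarrow$(iii) one direction is immediate: from a coronene nice subgraph take $R$ to be its central hexagon and $C$ its $18$-cycle boundary; these are disjoint with $R$ inside $C$, and since $V(R)\cup V(C)$ exhausts the coronene, $R\cup C$ is nice. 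The converse is the true geometric step: starting from disjoint nice cycles $R\subset\operatorname{int}(C)$ I would pass to an innermost inner cycle and argue, using only the combinatorics of the hexagonal lattice, that the annular region they bound must contain a coronene whose deletion still leaves a perfect matching. Here the alternating-hexagon existence fact of \cite{Zhang-Zhang-2000} and the annulus analysis behind \cite{Zhang-Chen-1986,Shiu-2002} are the natural tools. Assembling these shows that the strictness of $r(H)\le k(H)$, a coronene nice subgraph, and a nested nice pair of cycles are pairwise equivalent, which is the asserted equivalence of (i), (ii) and (iii).

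The main obstacle is twofold and lies in the two soft-to-hard hinges. First is the clean \emph{injectivity} of $\Phi$: one must show that the minimum-matching normalisation on $H-V(S)$ together with the proper-sextet convention lets $S$ be read back off $\Phi(S)$ unambiguously, even when the sextet hexagons sit so close that $\Phi(S)$ acquires additional, spurious proper alternating hexagons; taming these via the lattice minimality is the crux. Second is the \emph{geometric extraction} in (iii)$\Rightarrow$(ii): converting an abstract nested pair of nice cycles into an honest coronene nice subgraph demands a careful analysis of how cycles of the hexagonal lattice can nest, reducing to a minimal concentric configuration and verifying that the six hexagons encircling the inner hexagon are all present and jointly nice. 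Everything else --- the easy half of (ii)$\Leftrightarrow$(iii) and the bookkeeping for the inequality --- is routine once these two hinges are in place.
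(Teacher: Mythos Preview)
This theorem is not proved in the paper; it is quoted from \cite{Zhang-Chen-1986, Shiu-2002} and used only as a black box (specifically the implication (iii)$\Rightarrow$(ii), inside the proof of Theorem~\ref{Triphenylene}). There is therefore no in-paper proof to compare your proposal against.

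On the proposal itself, note that you have actually established the \emph{negation} of the literally stated (i): you argue that strict inequality $r(H)<k(H)$ is equivalent to (ii) and (iii), whereas the paper writes $r(H)=k(H)$. Your version is the correct one---in the cited sources the result is that $r(H)=k(H)$ holds exactly when $H$ has \emph{no} coronene as a nice subgraph---so the paper's (i) appears to carry a typo (a single hexagon, with $r=k=2$ and certainly no coronene, already refutes the statement as printed). Beyond this, there is a technical gap in your construction of $\Phi$: the graph $H-V(S)$ is in general not a hexagonal system (it may be disconnected, or a coronoid with internal holes), so ``the unique minimum Kekul\'e structure'' must be justified through the distributive-lattice theory for weakly elementary plane bipartite graphs rather than for hexagonal systems; and the recovery of $S$ from $\Phi(S)$ by greedy rotation is not obviously well-defined when $\Phi(S)$ acquires proper alternating hexagons outside $S$. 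The cited references handle injectivity via a more delicate super-sextet mechanism, not a bare lattice-minimum argument.
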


\begin{figure}[H]
\begin{center}
\includegraphics[height=5.5cm]{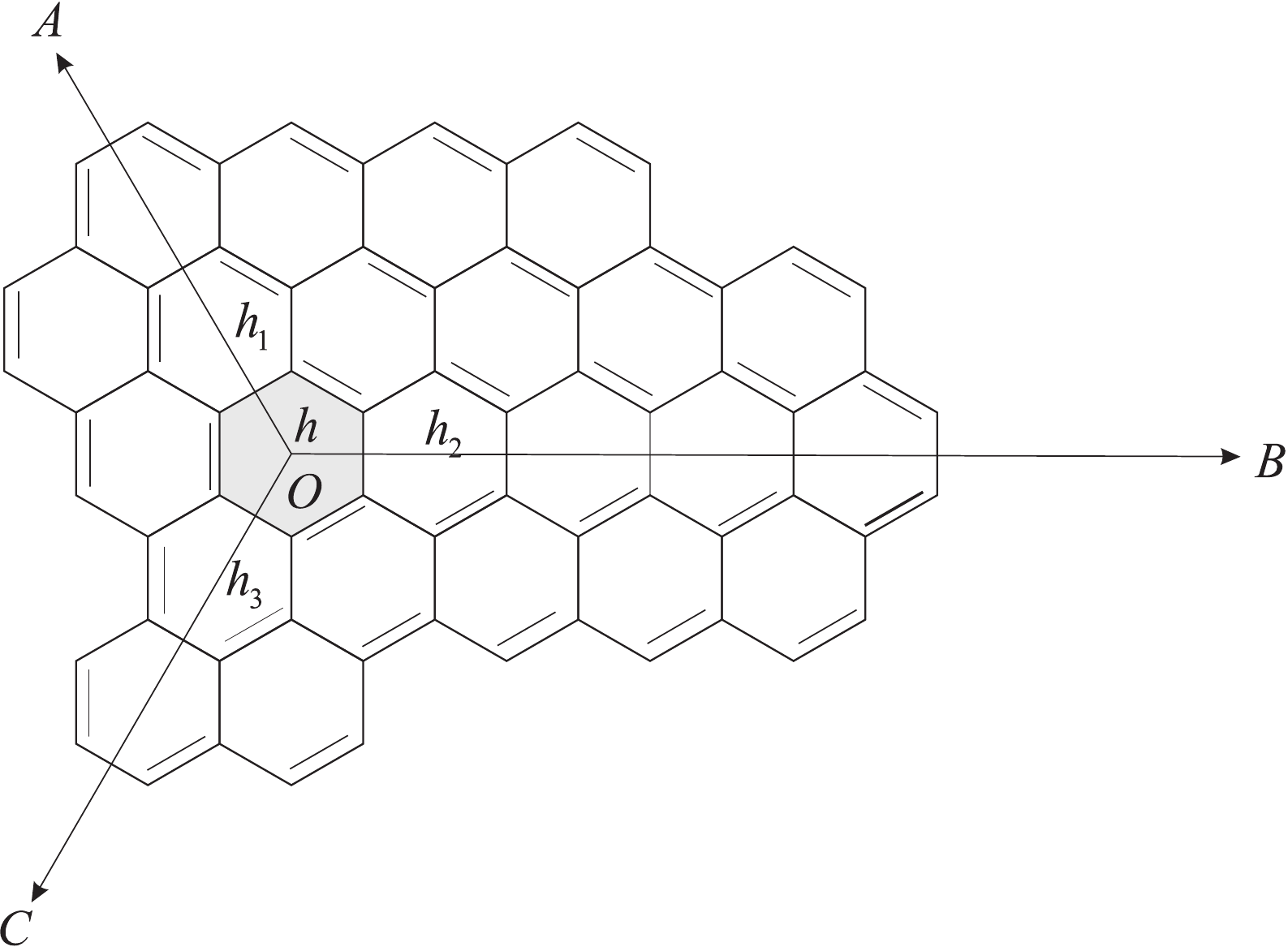}
\caption{A hexagonal system with a unique $M$-alternating hexagon for  a perfect matching $M$  marked by double lines.}
\label{Chapter-5-12}
\end{center}
\end{figure}

Let $H$ be a hexagonal system with a hexagon $h$. Draw three rays $OA, OB$ and $OC$ from the center $O$ of $h$ so that they pass through the centers of three disjoint edges of $h$ respectively, which divide the plane  into three areas $AOB, BOC$ and $COA$. Such three regional coordinate system is denoted by $O-ABC$.　  Zhang et al. \cite{Zhang-Guo-Chen-1988-2} ever gave the following fact.

\begin{lem}\label{Chapter-fact}
A hexagonal system $H$ has  a perfect matching $M$ with a unique $M$-alternating hexagon $h$  if and only if it has the coordinate system $O-ABC$ so that $O$　 is the center  of $h$,　rays $OA, OB$ and $OC$ do not intersect edges in $M$ and all edges of $M$ in anyone of areas  $AOB, BOC$ and  $COA$ are parallel to each other.
\end{lem}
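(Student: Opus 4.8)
The plan is to establish the equivalence by proving the two implications separately, beginning with the ``if'' direction, which is the more transparent one. \textbf{From the coordinate system to uniqueness.} Suppose the coordinate system $O$-$ABC$ exists with the stated properties. The rays $OA,OB,OC$ pass through the midpoints of three pairwise non-adjacent edges of $h$ (one in each of the three edge-directions) and avoid every edge of $M$, so these three edges are non-$M$ edges of $h$. I would first check that $h$ is $M$-alternating by showing its three complementary edges lie in $M$: fix a non-$M$ edge $a=uv$ of $h$; at $u$ the three incident edges are $a$, the complementary hexagon edge $e_u$, and an external edge $g_u$, with $e_u$ and $g_u$ lying in a common sector $\sigma$ and having distinct directions. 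If $u$ were matched by $g_u$, then at the far endpoint $w$ of $e_u$ the other hexagon edge of $h$ at $w$ is again a ray edge, hence non-$M$, forcing $w$ to be matched by its external edge $g_w$; but $g_u,g_w\in M$ both lie in $\sigma$ and are not parallel, a contradiction. Hence every complementary edge of $h$ is in $M$ and $h$ is $M$-alternating. For uniqueness, observe that an $M$-alternating hexagon must carry exactly one $M$-edge in each of the three directions; but any hexagon $h'\neq h$ is disjoint from $O$ and, by the geometry of three rays emanating from $O$ (each perpendicular to one of three alternating edges of $h$), meets at most one of them, so $h'$ lies in the closure of a single sector (all its $M$-edges then sharing one direction) or of two adjacent sectors (at most two directions); either way $h'$ cannot be $M$-alternating.

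\textbf{From uniqueness to the coordinate system.} Conversely, assume $M$ has $h$ as its only $M$-alternating hexagon. Then the six edges of $h$ split into three $M$-edges and three pairwise non-adjacent non-$M$ edges; put $O$ at the centre of $h$ and let $OA,OB,OC$ be the rays through the midpoints of the non-$M$ edges. One reads off directly from $h$ that in the sector opposite the ray through a non-$M$ edge $a$ the ``correct'' common direction should be that of $a$, and it remains to prove (I) no ray meets an edge of $M$ and (II) within each sector all $M$-edges are parallel. I would argue by contradiction: if (I) or (II) fails there is a ``bad'' $M$-edge $e$, namely one crossed by a ray or one lying in a sector with a direction other than the prescribed one; choose $e$ as close to $O$ as possible. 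The part of $H$ between $h$ and $e$ is then ``clean'', and I would propagate the forcing dictated by $h$ being $M$-alternating outward through this clean region (along the ray, resp. across the sector), tracking which edges are pushed into and out of $M$ as successive hexagons are traversed; the edge $e$ then disrupts the forced parallel pattern in a way that closes up into an $M$-alternating cycle $C$ with $h\not\subseteq I[C]$. Since every $M$-alternating cycle of a hexagonal system has an $M$-alternating hexagon in its interior (the fact from \cite{Zhang-Zhang-2000} already invoked in the proof of Theorem~\ref{main-thm}), $I[C]$ contains an $M$-alternating hexagon, which is distinct from $h$, contradicting uniqueness. Hence (I) and (II) hold and the desired coordinate system exists.

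\textbf{The main obstacle.} The whole weight of the proof sits in the ``only if'' direction, and specifically in converting a ``bad'' $M$-edge into an $M$-alternating cycle that avoids $h$. This requires running the forcing propagation outward from $h$ with care: a ray piercing a succession of hexagons of the hexagonal lattice creates a precise cascade of forced $M$/non-$M$ edges, and one must show that the first place this cascade deviates from the expected parallel pattern is exactly where a short alternating cycle — indeed a second $M$-alternating hexagon — is born. An alternative would be induction on the number of hexagons of $H$, peeling off a boundary strip while preserving the property ``$M$ has a unique $M$-alternating hexagon''; but that invariant is itself awkward to maintain under peeling, so I expect the localized forcing argument near the bad edge to be the cleaner route.
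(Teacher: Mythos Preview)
The paper does not supply a proof of this lemma: it is quoted verbatim as a known fact from Zhang, Guo and Chen \cite{Zhang-Guo-Chen-1988-2} and used as a black box in the proof of Theorem~\ref{Triphenylene}. So there is no ``paper's own proof'' to compare your attempt against; any argument you give is necessarily your own reconstruction of a result from the literature.

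That said, a few comments on the sketch itself. Your ``if'' direction is essentially sound, and the key observation you isolate --- that an $M$-alternating hexagon must carry one $M$-edge of each of the three slopes, while any hexagon $h'\neq h$ is confined to at most two sectors and hence sees at most two $M$-directions --- is exactly the right mechanism. Your ``only if'' direction, however, is only a plan: you correctly identify the obstacle (turning a first ``bad'' $M$-edge into a second $M$-alternating hexagon via propagation of forcing from $h$), but the propagation argument is not carried out, and the claim that the resulting alternating cycle $C$ satisfies $h\not\subseteq I[C]$ is precisely the point that needs work. In the original source this is handled through the combinatorics of the $Z$-transformation graph rather than by an ad hoc forcing chase; your direct approach is plausible but the case analysis along a ray versus inside a sector would need to be written out before it counts as a proof.
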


%
%
%
%

\begin{thm}\label{Triphenylene}
Let $H$ be a hexagonal system with a perfect matching. Then $H$ has no triphenylenes as nice subgraph if and only if for each perfect matching $M$ of $H$, $af(H,M)=fr(H,M)$.
\end{thm}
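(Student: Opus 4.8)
\textbf{Proof proposal for Theorem \ref{Triphenylene}.}

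The plan is to prove the two implications separately, using Theorem \ref{main-thm} and Theorem \ref{containment-relation} as the main engine for sufficiency, and an explicit construction for necessity.

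\textbf{Necessity (existence of a triphenylene nice subgraph forces a bad perfect matching).} Suppose $H$ has a triphenylene $T$ as a nice subgraph, with central hexagon $h$. Since $H-V(T)$ has a perfect matching, say $N$, and $T$ itself has a perfect matching, we can choose a perfect matching $M$ of $H$ whose restriction to $T$ is the ``all three branch hexagons alternating, central hexagon matched by the three spokes pointing outward'' Kekul\'e structure of $T$ — the one pictured by the bold edges in Figure \ref{fig:counterexample-1}(b) and in $R_n$ of Figure \ref{fig:hexagonal-system-11}. The idea, exactly as in the $R_n$ example, is that within $I[\partial T]$ this matching has essentially one more independent alternating cycle than it has alternating hexagons: the three outer hexagons of $T$ are $M$-alternating, and the big cycle $\partial T$ (or a suitable cycle through it) is $M$-alternating and compatible with them, but $h$ is not $M$-alternating. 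I would show $c'(H,M)\ge fr(H,M)+1$ by exhibiting a compatible $M$-alternating set consisting of all $M$-alternating hexagons of $H$ together with one extra alternating cycle supported on $\partial T$; this needs the observation that $\partial T$ is compatible with every $M$-alternating hexagon (any hexagon either lies inside $\partial T$ — and then it is one of the three branch hexagons, sharing only $M$-edges with $\partial T$ — or lies outside and is disjoint from it). Then $af(H,M)=c'(H,M)\ge fr(H,M)+1>fr(H,M)$, so $M$ is a perfect matching violating Eq.~(1). The main thing to check carefully here is that the extra cycle can genuinely be added while keeping compatibility and that no $M$-alternating hexagon was already ``using up'' $\partial T$; the triphenylene geometry makes this routine but it must be written out.

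\textbf{Sufficiency (no triphenylene nice subgraph $\Rightarrow$ Eq.~(1) for every $M$).} Let $M$ be an arbitrary perfect matching of $H$ and let $\mathcal A$ be a maximum non-crossing compatible $M$-alternating set, so $af(H,M)=|\mathcal A|$ by Theorem \ref{Anti-forcing-feedback} and Lemma \ref{Crossing-Noncrossing}. By Lemma \ref{Crucial-lemma}, if some two members of $\mathcal A$ had interiors in a containment relation then $Af(H)\ge|\mathcal A|+2=af(H,M)+2$; this alone does not give a contradiction for general $M$, so instead I would argue directly: take $C\in\mathcal A$ a minimal non-hexagon member (if any), set $H'=I[C]$, and run the same left-top/left-bottom corner analysis as in the proof of Lemma \ref{Crucial-lemma} to locate an $M$-alternating hexagon $S_1$ inside $H'$ with three consecutively adjacent hexagons in $H'$ — equivalently, a hexagon that is the central hexagon of a triphenylene-shaped configuration. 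The key new point is that when $H$ has \emph{no} triphenylene nice subgraph, the substructure found in that corner analysis cannot close up into a triphenylene: either the b-chains $G$ and $G'$ are forced to be linear chains and every member of $\mathcal A$ inside $C$ is itself a hexagon lying in a linear arrangement, or one produces, via a sequence of $\oplus$-moves on $M$-alternating hexagons, an actual triphenylene nice subgraph of $H$ (the three branch hexagons $S_1, S'_2$ or the analogous neighbours, together with the central hexagon, all simultaneously alternating for the rotated matching, and the rest of $H$ matched by the unchanged part of $M'$), contradicting the hypothesis. I would then invoke Theorem \ref{containment-relation}: its conclusions (disjoint interiors, each $I[C]$ a linear chain) combined with ``no triphenylene'' should upgrade to: every member of $\mathcal A$ is a hexagon. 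Replacing each $C\in\mathcal A$ by an $M$-alternating hexagon of $I[C]$ (guaranteed by \cite{Zhang-Zhang-2000}) gives a set $K$ of $M$-alternating hexagons with $|K|=|\mathcal A|$, whence $fr(H,M)\ge|K|=af(H,M)$; the reverse inequality $af(H,M)\ge fr(H,M)$ always holds, so Eq.~(1) follows.

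\textbf{Main obstacle.} The delicate step is the sufficiency direction's bridge from ``$\mathcal A$ has a non-hexagon member, or two members with nested interiors'' to ``$H$ contains a triphenylene nice subgraph''. One must show that the local configuration extracted by the corner analysis of $H'=I[C]$ — an $M$-alternating hexagon $S_1$ with three consecutive neighbours all inside $H'$ — can actually be completed to a genuine nice triphenylene of the \emph{whole} graph $H$, i.e. that after rotating the appropriate collection of alternating hexagons the three outer hexagons and the centre are simultaneously alternating and the complement of their vertex set is still matchable. This requires tracking which $M$-alternating hexagons get rotated and verifying the symmetric-difference bookkeeping (much as in Claims 2–6 of the proof of Lemma \ref{Crucial-lemma}), and handling the boundary cases where $G$ and $G'$ share their last row or where $n(1)$ or $t(1)$ equals $2$ versus $1$. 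I expect this to be where almost all the work lies; the rest is assembling already-proved theorems.
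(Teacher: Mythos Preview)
Your necessity direction (triphenylene nice subgraph $\Rightarrow$ some $M$ with $af>fr$) follows the paper's idea, but your description of the matching is wrong. In the Kekul\'e structure of Figure~\ref{fig:counterexample-1}(b) the \emph{central} hexagon is the unique $M$-alternating one; the three branch hexagons are \emph{not} $M$-alternating. With the matching you describe (branch hexagons alternating, spokes outward), each branch hexagon shares $M$-single boundary edges with $\partial T$, so $\partial T$ is \emph{not} compatible with them and your claimed compatible set $\{\text{all }M\text{-alt.\ hexagons}\}\cup\{\partial T\}$ fails. With the correct Figure~\ref{fig:counterexample-1}(b) matching the argument is immediate: the only $M$-alternating hexagon inside $T$ is $h$, and $h\cap\partial T$ consists exactly of the three $M$-double edges of $h$, so $\{h,\partial T\}$ is compatible and extends by all $M$-alternating hexagons of $H$ to a compatible set of size $fr(H,M)+1$.

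Your sufficiency direction has a genuine gap. The corner analysis of Lemma~\ref{Crucial-lemma} does \emph{not} produce a hexagon that is the centre of a triphenylene: ``three consecutively adjacent hexagons'' around $S_1$ (Claim~5(b)) is a different shape (three neighbours in a row, not alternating around $S_1$), and moreover that configuration arises only in the case $n(1)\ge 3$ which the minimality hypothesis excludes. Invoking Theorem~\ref{containment-relation} is illegitimate here since it requires $af(H,M)\ge Af(H)-1$. The paper proceeds entirely differently: assuming $af(H,M_0)>fr(H,M_0)$, one first gets a nested pair $C_1\subset C_2$ of compatible $M$-alternating cycles; minimising $h(C_1)+h(C_2)$ over all such pairs (allowing $M$ to vary) reduces to the case where $I[C_2]$ has a \emph{unique} $M$-alternating hexagon $h$. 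Two external results then do the work: Theorem~\ref{Coronoid-nice-subgraph} (if some $M$-alternating hexagon in $I[C_2]$ is disjoint from $C_2$ then a coronene, hence a triphenylene, is nice) and Lemma~\ref{Chapter-fact} (the $O$--$ABC$ coordinate description of a unique $M$-alternating hexagon), which forces the three hexagons adjacent to $h$ across its $M$-single edges to lie in $I[C_2]$ and to bound, together with $h$, an $M$-alternating triphenylene. You do not mention either of these ingredients, and without them there is no visible route from your corner analysis to a nice triphenylene.
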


\begin{proof}
We first prove the sufficiency. Suppose to the contrary  that  $H $ contains a triphenylene as a nice subgraph. Let $M'$ be a perfect matching of the triphenylene as  shown in Figure \ref{fig:counterexample-1}(b). Because the triphenylene is a nice subgraph of  $H$,  $M'$ can be extended to a perfect matching  $M $ of  $H $. So  $M'\subseteq M $. Let  $C$ be  the  boundary of the triphenylene. It is easy to see that $C $ is an $M$-alternating cycle of $H$ and $C$  is compatible with each  $M$-alternating hexagon of $H$. By Theorem \ref{Anti-forcing-feedback}, we have $af(H,M)\geq fr(H,M)+1>fr(H,M)$, a contradiction. Hence  the sufficiency holds.

We now prove the necessity. Suppose to the contrary  that $H $ has a perfect matching $M_0 $ so that  $af(H, M_0) > fries (H, M_0)$. Let $\mathcal {A} $ be a maximum non-crossing compatible  $M_0 $-alternating  set of  $H$. By Theorem \ref{Anti-forcing-feedback} and Lemma \ref{Crossing-Noncrossing}  we have $af(H, M_0) = |\mathcal {A}|$ and there are two cycles in   $\mathcal {A}$ so that their interiors have a containment relation; Otherwise, since  there is an  $M_0$-alternating hexagon  in the interior of each  $M_0$-alternating cycle  \cite{Zhang-Zhang-2000},  $H$ has at least  $af(H, M_0)$ $M _ 0 $-alternating hexagons, that is, $af(H, M_0)\leq fr(H, M_0)$, a contradiction. So we can select two cycles  $C_1$ and $C_2$ in $H$ to meet the following conditions:\\
\indent (i) $I[C_1] \subseteq I[C_2] $, \\
 \indent(ii) $H$ has a perfect matching $M$ so that  $C_1$ and  $C_2$ are  compatible  $M $-alternating cycles, and\\
 \indent (iii) $h(C_1) + h(C_2)$ is as small as possible subject to Conditions  (i) and (ii) (recall that   $h(C_i) $ is  the number of hexagons inside  $C_i$).

If  $I[C_2]$ has an $M$-alternating  hexagon  $h$ which is disjoint with  $C_2$, then
    $h\cup C_2$ is a nice subgraph of  $H$. By Theorem \ref{Coronoid-nice-subgraph}, $H$ contains a coronene  as a nice subgraph. So $H$ also contains triphenylene as a nice subgraph.

    From now on suppose that all the  $M $-alternating hexagons in $I[C_2]$ intersect $C_2$.
    Obviously, if  $C_2$ is a proper (resp. improper)  $M$-alternating cycle, then  each of the  $M$-alternating hexagons in  $I[C_2]$ is also proper (resp. improper). Without loss of generality, suppose that  the  $C_2$ and   $M$-alternating hexagons in $I[C_2]$ are proper. Take an $M $-alternating hexagon  $h$  inside  $C_1 $. Since $C_1$ and $C_2$ are compatible  $M $-alternating,   $h$  is compatible with  $C_2$. We can show the following fact.
\vskip 2mm
    \noindent\textbf {Claim.}  $h$ is the only  $M$-alternating hexagon in $I[C_2]$.
    \begin{proof}Suppose to the contrary  that  $h'$ is an $M$-alternating hexagon of $I[C_2]$ different from  $h$. Then  $h'$ and  $h$ are disjoint  because any two proper   $M $-alternating hexagons do not intersect. Let $M': = M\oplus h'$ and $C'_2: =C_2\oplus h'$. Then $M'$ is a perfect matching of  $H$, and  each component of  $C_2\oplus h'$ is  an $M'$-alternating cycle. Take a component $C'_2$ of $C_2\oplus h'$ so that $h$ lies inside   $C'_2$. Then  $h$ and  $C_2'$ are  also compatible  $M'$-alternating cycles.  It is clear that the cycles  $C'_2 $ and  $h$ satisfy the above conditions  (i) and  (ii). But  $h(C'_2)+1<h(C_2)+h(C_1)$,  a  contradiction with the minimality of $h(C_2)+h(C_1)$.\end{proof}

     Note that the restriction of $M$ on $I[C_2]$ is a perfect matching of  $I[C_2]$. From the Claim  and Lemma \ref{Chapter-fact}, from center $O $ of  $h$ we establish coordinate system $O-ABC$ so that  $OA, OB$ and $OC$ do not pass through $M$-edges in  $I[C_2]$, and all $M$-double edges of $I[C_2]$ in anyone of areas $AOB, BOC$ and $COA$ are parallel to each other (see Figure \ref {Chapter-5-12}). Because  $h$ and  $C_2$ are two compatible  $M $-alternating cycles,  every  $M $-single edge of  $h $ is not on  $C_2$. This shows that  $h$ has three adjacent hexagons  $h_1, h_2$, and $h_3$ in $I[C_2]$ which intersect $OA, OB$, and $OC $  respectively. Further,  hexagons  $h, h_1, h_2 $ and  $h_3$ form a triphenylene whose boundary is $M$-alternating cycle. So the triphenylene is a nice subgraph of  $H$, a contradiction. So $af(H, M_0) =fr(H, M_0)$ and the necessity holds.
\end{proof}


\end{document}